\newcommand*\samethanks[1][\value{footnote}]{\footnotemark[#1]}
\newcommand{\R}{{\mat R}}
\newcommand{\beq}{\begin{equation}}
\newcommand{\eeq}{\end{equation}}
\newcommand{\be}{\begin{eqnarray}}
\newcommand{\ben}{\begin{eqnarray*}}
\newcommand{\en}{\end{eqnarray}}
\newcommand{\enn}{\end{eqnarray*}}
\newcommand{\pa}{\partial}
\newcommand{\ov}{\overline}
\newcommand{\Om}{\Omega}
\newcommand{\wh}{\widehat}
\newcommand{\mat}{\mathbb}
\newcommand{\bv}{{\bf{v}}}
\newcommand{\bu}{{\bf{u}}}
\newcommand{\bts}{\text{\bf{\sf{t}}}}
\newcommand{\bn}{{\bf{n}}}
\newcommand{\bw}{{\bf{w}}}
\newcommand{\bp}{{\bf{p}}}
\newcommand{\bzero}{\boldsymbol{0}}
\newcommand{\bt}{{\bf{t}}}
\newcommand{\bC}{{\bf{C}}}
\newcommand{\bPhi}{\boldsymbol{\Phi}}
\newcommand{\bpsi}{\boldsymbol{\psi}}
\newcommand{\bV}{{\bf{V}}}
\newcommand{\bx} {\boldsymbol{x}}
\newcommand{\by} {\boldsymbol{y}}
\newcommand{\norms}[1]{\parallel\! #1 \!\parallel} 
\newcommand{\bz} {\boldsymbol{z}}
\newcommand{\bg} {{\boldsymbol{g}}}
\newcommand{\bzeta} {{\boldsymbol{\zeta}}}
\newcommand{\bI} {{\bf I}}
\newcommand{\bphi} {\boldsymbol{\phi}}
\newcommand\exs{\hspace*{0.4mm}}
\newcommand\xxs{\hspace*{0.2mm}}
\newcommand\nxs{\hspace*{-0.2mm}}
\newcommand{\lb}{\label}
\definecolor{rot}{rgb}{0,0,0}
\definecolor{nn}{rgb}{0,0,0}
\definecolor{hw}{rgb}{0,0,0}
\definecolor{xl}{rgb}{0,0,0}
\definecolor{nnn}{rgb}{0,0,0}
\definecolor{hw1}{rgb}{0,0,0}
\definecolor{lxl}{rgb}{0,0,0}
\newtheorem{theorem}{Theorem}[section]
\newtheorem{assum}{Assumption}[section]
\newtheorem{lemma}[theorem]{Lemma}
\newtheorem{remark}[theorem]{Remark}
\newtheorem{proposition}[theorem]{Proposition}
\begin{document}
%~~~~~~~~~~~~~~~~~~~~~~~~~~~~~~~~~~~~~~~~~~~~~~~~~~~~~~~~~~~~~~~~~~~~~~~~~~~~

\title{\bf Time- vs. frequency- domain inverse elastic scattering: Theory and experiment}

\author[1]{X.~Liu\thanks{XL and JS contributed equally to this work.}}
\author[2]{J.~Song\samethanks}
\author[2,3]{F.~Pourahmadian\thanks{Corresponding author email: fatemeh.pourahmadian@colorado.edu}}
\author[4]{H.~Haddar}
\affil[1]{School of Mathematical Sciences, Beihang University, Beijing, 100191, CHINA}
\affil[2]{Department of Civil, Environmental \& Architectural Engineering, University of Colorado Boulder, USA}
\affil[3]{Department of Applied Mathematics, University of Colorado Boulder, USA}
\affil[4]{INRIA, Center of Saclay Ile de France and UMA, ENSTA Paris Tech, Palaiseau Cedex, FRANCE}
\date{} 
\renewcommand\Affilfont{\itshape\small}

\maketitle

\vspace{-7mm}
\begin{abstract}
This study formally adapts the time-domain linear sampling method (TLSM) for ultrasonic imaging of stationary and evolving fractures in safety-critical components. The TLSM indicator is then applied to the laboratory test data of~\cite{Yue2021,pour2021} and the obtained reconstructions are compared to their frequency-domain counterparts. The results highlight the unique capability of the time-domain imaging functional for high-fidelity tracking of evolving damage, and its relative robustness to sparse and reduced-aperture data at moderate noise levels. A comparative analysis of the TLSM images against the multifrequency LSM maps of~\cite{Yue2021} further reveals that thanks to the full-waveform inversion in \emph{time} and space, the TLSM generates images of remarkably higher quality with the same dataset.   
\end{abstract}

%\begin{abstract}
%\vspace{.2in}
%{\bf Keywords:}  
%\end{abstract}

%~~~~~~~~~~~~~~~~~~~~~~~~~~~~~~~~~~~~~~~~~~~~~~~~~~~~~~~~~~~~~~~~~~~~~~~~~~~~
\section{Introduction}
%~~~~~~~~~~~~~~~~~~~~~~~~~~~~~~~~~~~~~~~~~~~~~~~~~~~~~~~~~~~~~~~~~~~~~~~~~~~~

Recent laboratory implementations~\cite{baro2018,Yue2021} of the linear sampling method (LSM)~\cite{Col1996,Fiora2008} for ultrasonic imaging showcase a unique opportunity for almost real-time reconstruction of anomalies with exceptional resolution and flexibility in terms of sensing configuration.  In~\cite{baro2018,Yue2021}, the data inversion is conducted in the frequency domain by deploying the most pronounced spectral components of the (time-domain) measurements. More specifically,~\cite{baro2018} uses an adaptation of LSM in the modal space to recover the support of damage in an elastic waveguide, while~\cite{Yue2021} directly computes the sampling indicator from the Fourier-transformed boundary measurements to reconstruct a partially-closed stationary fracture in an elastic plate. Demonstrating success in imaging with dense datasets, these studies simultaneously expose the sensitivity of the frequency-domain LSM to noise especially with sparse data. The latter was displayed by the emergence of many reconstruction artifacts and failure to recover parts of the hidden scatterer. To resolve this,~\cite{Yue2021} applies the generalized linear sampling method (GLSM)~\cite{Audibert2014,Fatemeh2017} to the same dataset and the results show remarkable improvement. The GLSM furnishes a more robust imaging tool by eliminating a heuristic assumption involved in the design of LSM imaging functional. More specifically, the GLSM takes advantage of a symmetric factorization of the scattering operator along with coercivity of the resulting middle operator to carefully construct a new cost function whose minimizer carries suitable properties for a more stable imaging indicator. The robustness of GLSM, however, comes with the cost of a slower reconstruction due to the more complex minimization of its associated cost function. Also, while the more rigorously built GLSM generates higher quality images, the inversion still occurs in the frequency domain which could be another source of sensitivity to sparse imaging -- since instead of full-length time signals, only a discrete subset of their spectra is used for computing the LSM indicator maps. In light of this, there are ongoing efforts to formally extend the GLSM indicator for inverse scattering in the time domain, e.g.,~\cite{cako2019,Liu2021}.              

Motivated by the promise of spatiotemporal full-waveform inversion and in light of recent developments on inverse electromagnetic and acoustic scattering in the time domain~\cite{chen2010,HLM14,guo2016, Selgas2021}, this study rigorously formulates the time-domain LSM for elastic-wave imaging of crack networks in solids. The TLSM indicator is then applied to the laboratory test data in~\cite{Yue2021} and the results are compared to multifrequency LSM reconstructions from the same dataset. It should be mentioned that~\cite{Yue2021} is focused on single-step imaging of \emph{stationary} scatterers where experiments are conducted on (a) intact specimen before mounting in a MTS load frame for fracturing, and (b) fractured specimen after dismounting at $60\%$ of the maximum load in the post peak regime. The ultrasonic measurements in (a) and (b) are then used to compute the scattering signatures of a partially closed fracture in the specimen for constructing the LSM maps. On the other hand,~\cite{pour2021} reports a complementary suit of ultrasonic experiments conducted \emph{during fracturing} when the same specimen is in the load frame. These measurements have so far been only used for sequential recovery of (geometric and interfacial) \emph{evolution} via the differential imaging method~\cite{pour2019}. In this study, we take advantage of the dataset in~\cite{pour2021} at $75\%$ and $90\%$ of the maximum load for single-step reconstruction via TLSM to further examine the capacity of this indicator for tracking of evolving anomalies.         

This paper is organized as follows. {Section}~\ref{PS} presents the direct scattering problem and the affiliated dataset for inversion. Relevant function spaces along with the admissibility conditions for parameters, such that the forward problem remains wellposed, are discussed in {Section}~\ref{Wellp}. {Section}~\ref{NfO} defines the near-field elastic scattering operator and its factorization. This is followed by establishing some results on the properties of involved operators. Based on the latter, the time-domain LSM indicator is introduced in {Section}~\ref{INS}. {Section}~\ref{exp_set} is dedicated to implementation of this imaging modality to laboratory test data and comparing the results with the corresponding frequency-domain reconstructions. Finally, a summary of the main findings is provided in {Section}~\ref{Conc}.

%~~~~~~~~~~~~~~~~~~~~~~~~~~~~~~~~~~~~~~~~~~~~~~~~~~~~~~~~~~~~~~~~~~~~~~~~~~~~
\section{Problem statement}\lb{PS}
%~~~~~~~~~~~~~~~~~~~~~~~~~~~~~~~~~~~~~~~~~~~~~~~~~~~~~~~~~~~~~~~~~~~~~~~~~~~~
\setcounter{equation}{0}

We consider the elastic-wave sensing of a fracture $\Gamma\subset\R^3$ embedded in a homogeneous, isotropic, elastic solid endowed with the mass density $\rho$ and Lam\'{e} parameters $\mu$ and $\lambda$. The fracture is characterized by a heterogeneous contact condition to describe the spatially-varying nature of its rough  interface. For a given vector $\bp\in\mathbb{R}^3$, $\Gamma$ is illuminated by an incident point source which is convolution in time of the Green dyadic with a generic pulse $\chi$,
\be\label{eqinc}
\bu_\chi^i(\bx,t;\by,\bp) := \left[\chi(\cdot)*\boldsymbol{\Pi}(\bx,\cdot;\by)\bp\right](t),\quad (\bx,t)\in \R^3\backslash\{\by\}\times\R.
\en
Here, $\boldsymbol{\Pi}$ is the fundamental displacement tensor which may be recast as
\ben
\begin{aligned}
\boldsymbol{\Pi}(\bx,t;\by)~=~&\frac{1}{\mu}\left(\bI_2-\nabla_{\bx}\otimes\nabla_{\bx}\right) 
\frac{\delta\left(t-|\bx-\by|/\sqrt{\mu/\rho}\right)}{|\bx-\by|} ~+\,  \\*[0.2mm]
&\frac{1}{\lambda+2\mu}\nabla_{\bx}\otimes\nabla_{\bx}\frac{\delta\left(t-|\bx-\by|/\sqrt{(\lambda+2\mu)/\rho}\right)}{|\bx-\by|}, \quad (\bx,t)\in \R^3\backslash\{\by\}\times\R,
\end{aligned}
\enn
where $\bI_2$ is the $3\!\times\!3$ identity dyadic. The corresponding scattered field $\bv$ solves
\be\label{eq1}
\left\{
\begin{array}{ll}
\nabla\cdot\left(\bC:\nxs\nabla \bv(\bx,t)\right)\,-\,\rho \ddot{\bv} (\bx,t) \,=~ \bzero \quad&\text{in}\quad \mathbb{R}^3\backslash\Gamma\times\R,\\[1.5mm]
{\bf n}\cdot\left(\bC:\nxs\nabla \bv(\bx,t)\right) \,=~ {\bf K}(\bx)[\![\bv(\bx,t)]\!]\,-\,{\bf t}^i(\bx,t) \quad&\text{on}\quad\Gamma\times\R,
\end{array}
\right.
\en
subject to the causality condition $\bv(\bx,t)=\bzero$ for $t<0$. The elasticity tensor $\bC$ is given by
\ben
\bC ~=~ \lambda \xxs {\bf I}_2 \nxs\otimes\nxs {\bf I}_2 \,+\, 2\mu{\bf I}_4,
\enn
with ${\bf I}_4$ denoting the $4$th-order symmetric identity tensor; $[\![\bv]\!]=[\bv^+-\bv^-]$ is the jump in $\bv$ across $\Gamma$; {${\bf t}^i = {\bf n}\cdot\left(\bC:\nabla {\bf u}_\chi^i \right)$} is the free-field traction vector; $\bf n = {\bf n}^-$ is the unit normal on $\Gamma$; {${\bf K=\bf K}(\bx)$} is a {\em symmetric} matrix of the specific stiffness coefficients.
\begin{remark}
	In what follows, all quantities are rendered dimensionless by
	taking $\rho, \mu$, and $R$--the characteristic size of a region sampled for fractures--as the respective scales for mass density, elastic modulus, and length, which amounts to setting $\rho = \mu = R = 1$~\cite{Scaling2003}.
\end{remark}
The {\em inverse problem} is to reconstruct $\Gamma$ from the partial knowledge of the scattered waves on some measurement surface $\Gamma_m\subset\R^3\backslash\Gamma$. The measured data set is
\ben
	\left\{\bv(\bx,t;\by,\bp) : \bx\in\Gamma_m, \, \by\in \Gamma_i, \, t\in\R, \,\bp\in \{{\bf{e}}_k\}_{k=1,2,3}\right\},
\enn
where $\bv(\bx,t;\by,\bp)$ is the scattered field for an incident point source emitted at $\by\in\Gamma_i\subset\R^3\backslash\Gamma$ and $\{{\bf{e}}_k\}_{k=1,2,3}$ is the unit coordinate vectors in $\R^3$.

%~~~~~~~~~~~~~~~~~~~~~~~~~~~~~~~~~~~~~~~~~~~~~~~~~~~~~~~~~~~~~~~~~~~~~~~~~~~~
\section{Well-posedness of the forward scattering problem}\lb{Wellp}
%~~~~~~~~~~~~~~~~~~~~~~~~~~~~~~~~~~~~~~~~~~~~~~~~~~~~~~~~~~~~~~~~~~~~~~~~~~~~
We shall analyze the scattering problem~\eqref{eq1} with $\rho=1$ by deploying the Laplace transform as in \cite{HLM14}. Given the Hilbert space $X$, we denote by $\mathcal{D}(\mathbb{R};X)=C^{\infty}_0(\mathbb{R};X)$ smooth and compactly supported $X$-value functions. Further, $\mathcal{D}'(\mathbb{R};X)$ are $X$-valued distributions on the real line and the corresponding tempered distributions are $\mathcal{S}'(\mathbb{R};X)$. For $\sigma\in\mathbb{R}$ we set
\ben
\mathcal{L}'_{\sigma}(\mathbb{R};X)~=~\left\{f\in\mathcal{D}'(\mathbb{R};X)\! : e^{-\sigma t}f(t)\in
\mathcal{S}'(\mathbb{R};X)\right\}.
\enn
For $f\in\mathcal{L}'_{\sigma}(\mathbb{R};X)$, define the Laplace transform with respect to the time variable as
\ben%\label{FL}
(\mathscr{L}f)(s)~=~\widehat{f}(s)~=\,\exs\int^{\infty}_{-\infty}e^{is t}f(t)dt~=~\mathscr{F}(e^{-\sigma t} f)(\omega),\quad s~=~\eta+i\sigma.
\enn
For $\sigma\in\R$, we denote
\ben
\mathbb{C}_\sigma:=\left\{s\in\mathbb{C} : \Im s\leq\sigma\right\}.
\enn
Formally applying the Laplace transform to~\eqref{eq1}, observe that $\widehat{\bv}(\bx,s)$ satisfies
\be\label{eq2}
\left\{
\begin{array}{ll}
	\nabla\cdot\left(\bC:\nxs\nabla \widehat{\bv}(\bx,s)\right) \,+\, s^2 \xxs \widehat{\bv} (\bx,s) \,=~ \bzero \quad&\text{in}\quad \mathbb{R}^3\backslash\Gamma,\\[1.5mm]
{\bf n}\cdot\left(\bC:\nabla \widehat{\bv}(\bx,s)\right) \,-\, {\bf K}(\bx)[\![\widehat{\bv}(\bx,s)]\!]~=\, -\xxs\widehat{\bf t}^{i}(\bx,s)\quad&\text{on}\quad\Gamma.
\end{array}
\right.
\en

Now, the objective is to establish explicit bounds on $\widehat{\bv}(\bx,s)$ in terms of $s\in\mathbb{C}_{\sigma_0}$. In this vein, with reference to~\cite{H03}, let us define a frequency dependent norm on $H^1(D)^3$ for a Lipschitz domain $D$ as the following
\ben
\|\widehat{\bu}(\cdot,s)\|_{H^1_s(D)^3}:=\sqrt{\int_D\left(|\nabla\widehat{\bu}(\bx,s)|^2+|s\widehat{\bu}(\bx,s)|^2\right)d\bx},
\enn
which is equivalent to the usual norm $H^1(D)^3$ if $s\neq 0$. Similarly, frequency dependent norms exist for the trace spaces $H^{\pm 1/2}(\pa D)^3$ on the boundary, see~\cite{McLean2000} for the general definition. The latter may be defined on $\pa D$ using the spatial Fourier transform $\mathcal{F}$ in $\mathcal{S}'(\R^2)$, local charts $\varPhi_j\!:\R^2\rightarrow \pa D$, and the associated partition of unity $\chi_j\!:\pa D\rightarrow\R, \, j=1,...,N$, by
\ben
\|\bphi(\cdot,s)\|^2_{H^{\pm 1/2}_s(\pa D)^3}:=\sum^N_{j=1}\int_{\R^2}\left(|s|^2+|\bx|^2\right)^{\pm 1/2}
\left|\mathcal{F}[(\chi_j\bphi)\circ\varPhi_j(\bx)]\right|^2d\bx,\quad s\in\R+i\sigma,\, \sigma>0.
\enn
When equipped with these norms, the spaces $H^{\pm 1/2}_s(\pa D)^3$ are dual to each other for the duality product extending the $L^2$ inner product $\left<\xxs \text{\bf f},\text{\bf g} \xxs \right>_{\pa D}=\int_{\pa D}\exs \ov{\text{\bf g}}\nxs\cdot\nxs \text{\bf f} \,\exs dS$.

Next, it is assumed that the fracture surface $\Gamma$ may be arbitrarily extended to a piecewise smooth and simply connected surface
$\pa \text{\sf{D}}$ enclosing the bounded domain $\text{\sf{D}}$ such that the normal vector $\bn$ to the fracture surface
$\Gamma$ coincides with the unit outward normal vector to $\pa \text{\sf{D}}$. Moreover, $\Gamma$ is an open set relative to $\pa \text{\sf{D}}$ with a positive surface measure. In this setting, let us define
\ben
H^{\pm 1/2}_s(\Gamma)^3&:=&\left\{\text{\bf f}\exs\big|_\Gamma \! : \,\, \text{\bf f}\in H_s^{\pm 1/2}(\pa \text{\sf{D}})^3\right\},\\
\tilde{H}^{\pm 1/2}_s(\Gamma)^3&:=&\left\{\text{\bf f}\exs\in H_s^{\pm 1/2}(\pa \text{\sf{D}})^3 : \text{supp}(\text{\bf f}\exs\xxs)\subset \overline{\Gamma}\right\}.
\enn

Given the above, note that $H^{-1/2}_s(\Gamma)^3$ and $\tilde{H}_s^{-1/2}(\Gamma)^3$ are respectively the dual spaces of $\tilde{H}_s^{1/2}(\Gamma)^3$ and $H^{1/2}_s(\Gamma)^3$ so that the following embeddings hold
\ben
\tilde{H}_s^{1/2}(\Gamma)^3\subset {H}_s^{1/2}(\Gamma)^3 \subset L^2_s(\Gamma)^3
\subset \tilde{H}_s^{-1/2}(\Gamma)^3 \subset H^{-1/2}_s(\Gamma)^3.
\enn
\begin{remark}
For brevity, a short-hand notation is used in what follows for the vector norms such that e.g.,~$\|\cdot\|_{H_s^{1/2}(\Gamma)^3}$ is implied by $\|\cdot\|_{H_s^{1/2}(\Gamma)}$.
\end{remark}

In the context of~\eqref{eq1}, given $\widehat{\bv}\in H^1_s(\R^3\backslash\Gamma)^3$ then, by the trace theorem, {$[\![\widehat{\bv}]\!]\in \tilde{H}_s^{1/2}(\Gamma)^3$}~\cite{McLean2000}. Let us define by ${\rm tr}_\Gamma\! : \widehat{\bv}\rightarrow [\![\widehat{\bv}]\!]$ the trace operator from $H^1(\R^3\backslash\Gamma)^3$ into $\tilde{H}^{1/2}(\Gamma)^3$.

\begin{lemma}\label{lemma1}
	Let $\sigma_0>0$, there exists a constant $C>0$ depending only on $\Gamma$ and $\sigma_0$ such that
	\ben
	\|{\rm tr}_\Gamma \widehat{\bv}\|_{\tilde{H}_s^{1/2}(\Gamma)}\leq C\xxs\|\widehat{\bv}\|_{H_s^1(\R^3\backslash\Gamma)}\quad \forall\exs\widehat{\bv}\in H^1(\R^3\backslash\Gamma)^3, \,\,~s\in\mathbb{C}_{\sigma_0}.
	\enn
\end{lemma}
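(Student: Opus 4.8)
The plan is to reduce the jump trace across the open crack $\Gamma$ to one-sided traces on the \emph{closed} surface $\pa\text{\sf{D}}$, and then to deduce the $s$-uniform one-sided trace estimate from the classical ($s=1$) trace theorem by a dilation argument in which both sides of the inequality acquire exactly the same power of $|s|$, so that the $s$-dependence cancels.

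First I would split $\widehat{\bv}\in H^1(\R^3\backslash\Gamma)^3$ into $\widehat{\bv}^{+}:=\widehat{\bv}|_{\text{\sf{D}}}$ and $\widehat{\bv}^{-}:=\widehat{\bv}|_{B\backslash\ov{\text{\sf{D}}}}$ for a fixed ball $B\supset\ov{\text{\sf{D}}}$. Since $\widehat{\bv}$ is $H^1$ across $\pa\text{\sf{D}}\backslash\ov{\Gamma}$, the boundary traces of $\widehat{\bv}^{\pm}$ on $\pa\text{\sf{D}}$ coincide there, so the extension by zero of $[\![\widehat{\bv}]\!]$ to $\pa\text{\sf{D}}$ is supported in $\ov{\Gamma}$; by the definition of $\tilde{H}^{1/2}_s(\Gamma)^3$ this gives $\|{\rm tr}_\Gamma\widehat{\bv}\|_{\tilde{H}^{1/2}_s(\Gamma)}=\|[\![\widehat{\bv}]\!]\|_{H^{1/2}_s(\pa\text{\sf{D}})}\leq\|\widehat{\bv}^{+}|_{\pa\text{\sf{D}}}\|_{H^{1/2}_s(\pa\text{\sf{D}})}+\|\widehat{\bv}^{-}|_{\pa\text{\sf{D}}}\|_{H^{1/2}_s(\pa\text{\sf{D}})}$. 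It therefore suffices to establish, for the bounded Lipschitz domains $\text{\sf{D}}$ and $B\backslash\ov{\text{\sf{D}}}$, the one-sided estimate $\|\widehat{\bu}|_{\pa\text{\sf{D}}}\|_{H^{1/2}_s(\pa\text{\sf{D}})}\leq C\exs\|\widehat{\bu}\|_{H^1_s(\text{\sf{D}})}$ with $C$ independent of $s$.

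The core is the flat model on the half-space $\R^3_+$ with boundary $\R^2$. Here I would apply the dilation $\widehat{\bu}\mapsto\widehat{\bu}(|s|\,\cdot)$: a change of variables gives $\|\widehat{\bu}(|s|\,\cdot)\|_{H^1_s(\R^3_+)}=|s|^{-1/2}\|\widehat{\bu}\|_{H^1(\R^3_+)}$, while the scaling $\mathcal{F}[\phi(|s|\,\cdot)]=|s|^{-2}(\mathcal{F}\phi)(\cdot/|s|)$ combined with the substitution $\bxi=|s|\bzeta$ in the defining integral gives $\|{\rm tr}\,\widehat{\bu}(|s|\,\cdot)\|_{H^{1/2}_s(\R^2)}=|s|^{-1/2}\|{\rm tr}\,\widehat{\bu}\|_{H^{1/2}(\R^2)}$ — the same factor $|s|^{-1/2}$ on both sides. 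Feeding the rescaled function into the classical half-space trace inequality and cancelling $|s|^{-1/2}$ then yields $\|{\rm tr}\,\widehat{\bu}\|_{H^{1/2}_s(\R^2)}\leq C_0\|\widehat{\bu}\|_{H^1_s(\R^3_+)}$ with $C_0$ the \emph{classical} trace constant, for every $s\neq0$.

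Finally I would globalize through the fixed finite atlas $\{\varPhi_j,\chi_j\}_{j=1}^{N}$ of $\pa\text{\sf{D}}$ used to define $\|\cdot\|_{H^{\pm1/2}_s}$, applying the model estimate to each $(\chi_j\widehat{\bu})\circ\varPhi_j$ and summing. The main obstacle — and the only point that is not routine — is that the terms in which a derivative lands on the cutoff $\chi_j$ or on the chart metric are \emph{not} invariant under the dilation: one must check that after rescaling they are lower order in $|s|$, so their contribution stays bounded uniformly in $s$ (this is where finiteness of the atlas and piecewise smoothness of $\pa\text{\sf{D}}$ are used), whereas for $|s|$ in a bounded set the weighted norms are equivalent to the standard ones with $\sigma_0$-dependent constants and the ordinary trace theorem closes the argument; patching the two regimes produces a single $C=C(\Gamma,\sigma_0)$. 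Alternatively, and more economically, one may simply invoke the $s$-explicit trace and extension estimates of~\cite{McLean2000,H03}, which furnish precisely this inequality.
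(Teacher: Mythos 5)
The paper does not actually prove Lemma~\ref{lemma1}: it states the estimate and leans on the cited references (\cite{McLean2000} for the trace theorem, and the $s$-explicit bounds of the retarded-potential literature such as \cite{H03}), so your proposal is supplying an argument the authors omitted rather than diverging from one they gave. Your sketch is the standard route by which those references obtain the bound, and it is correct in all essentials: the reduction of the jump $[\![\widehat{\bv}]\!]$ to the difference of the two one-sided traces on the closed surface $\partial\mathsf{D}$ (using that the traces agree on $\partial\mathsf{D}\setminus\overline{\Gamma}$, so the zero extension lands in $\tilde{H}^{1/2}_s(\Gamma)^3$ with the right norm), the half-space dilation computation (both $\|\widehat{\bu}(|s|\,\cdot)\|_{H^1_s(\R^3_+)}$ and $\|\mathrm{tr}\,\widehat{\bu}(|s|\,\cdot)\|_{H^{1/2}_s(\R^2)}$ do pick up exactly $|s|^{-1/2}$, so the classical trace constant survives unchanged), and the correct identification of the only delicate step, namely that commutator terms from the cutoffs $\chi_j$ and the Lipschitz charts scale one power of $|s|$ better and are absorbed via $\|\widehat{\bu}\|_{L^2}\leq |s|^{-1}\|s\widehat{\bu}\|_{L^2}\leq \sigma_0^{-1}\|s\widehat{\bu}\|_{L^2}$. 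That last absorption is precisely where the hypothesis on $s$ and the $\sigma_0$-dependence of $C$ enter, and it requires $|s|\geq\sigma_0$; this holds on the lines $\R+i\sigma$, $\sigma\geq\sigma_0$, on which the paper's norms are defined (note the paper's displayed definition of $\mathbb{C}_{\sigma_0}$ with $\Im s\leq\sigma_0$ is evidently a sign typo, since it would permit $s\to 0$ and defeat any uniform constant). Your final "two-regime patching" is redundant --- the single estimate $|s|\geq\sigma_0$ covers everything --- but harmless. As a fully written proof one would still need to spell out the Lipschitz-chart bookkeeping (the classical trace theorem for Lipschitz graphs and the $s$-independence of the Jacobian bounds), but nothing there can fail; alternatively your closing remark, that one may just quote the $s$-explicit trace estimates of \cite{H03,McLean2000}, is exactly what the paper implicitly does.
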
	

Now, we are in position to investigate the well-posedness of the direct scattering problem \eqref{eq2}. This problem can be written variationally in terms of $\widehat\bv\in H^1(\R^3\backslash\Gamma)^3$ as
\be\label{eq3}
A(\widehat\bv,\bw)~=~g(\bw),\quad \forall \bw\in H^1(\R^3\backslash\Gamma)^3,
\en
with
\be\label{eq4}
A(\widehat\bv,\bw)~:=\,- s^2\int_{\R^3\backslash\Gamma}\ov\bw\cdot\widehat\bv \exs d\bx
\,+\int_{\R^3\backslash\Gamma}\nabla\ov\bw :\bC :\nabla\widehat\bv \exs d\bx
\xxs\,+\xxs\left<\xxs{\bf K}\xxs [\![\widehat\bv]\!],[\![\bw]\!]\xxs\right>_{\Gamma},
\en
where $\langle\exs\cdot\exs,\exs\cdot\exs\rangle_\Gamma$ denotes the duality product $\langle H^{-1/2}(\Gamma),\tilde{H}^{1/2}(\Gamma)\rangle$, and
\ben
g(\bw)~:=~\int_\Gamma \exs [\![\ov\bw]\!] \cdot\xxs \wh\bt^i \exs dS.
\enn
Then we have the following result.
\begin{theorem}\label{thm2}
	Given the symmetric, real-valued, and positive semi-definite stiffness matrix ${\bf K}\in L^\infty(\Gamma)^{3\times 3}$, let $s\in\mathbb{C}_{\sigma_0}$ for $\sigma_0>0$ and assume that $\wh\bt^i\in H^{-1/2}(\Gamma)^3$. Then,~\eqref{eq2} has a unique solution $\widehat\bv(\cdot,s)\in H^1(\R^3\backslash\Gamma)^3$. Moreover, there exists a constant $C$ depending only on $\sigma_0$ and $\Gamma$ such that
	\be\label{eq5}
	\|\widehat\bv(\cdot,s)\|_{H^1_s(\R^3\backslash\Gamma)}\leq C(\sigma_0,\Gamma)|s| \|\wh\bt^i(\cdot,s)\|_{H^{-1/2}_s(\Gamma)}.
	\en
\end{theorem}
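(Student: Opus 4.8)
The plan is to establish the variational identity~\eqref{eq3} by the Lax--Milgram theorem after rotating the sesquilinear form. Because the coefficient $-s^2$ is complex, $A$ is not itself coercive on $H^1(\R^3\backslash\Gamma)^3$ equipped with $\|\cdot\|_{H^1_s(\R^3\backslash\Gamma)}$; following the Laplace-domain arguments of~\cite{HLM14,H03} I would instead work with $B(\widehat\bv,\bw):=\mathrm{i}\,\bar s\, A(\widehat\bv,\bw)$ and carry out three steps: (a) show $B$ is $H^1_s$-coercive with a constant $\alpha=\alpha(\sigma_0,\Gamma)>0$ that is \emph{uniform} in $s\in\mathbb{C}_{\sigma_0}$; (b) show $A$ is bounded on $H^1_s\times H^1_s$ with a constant independent of $s$, so that $B$ is bounded with constant $\leq C_A|s|$; (c) apply Lax--Milgram to $B$ with right-hand side $\bw\mapsto\mathrm{i}\,\bar s\, g(\bw)$ to get existence and uniqueness, and then test the solution against itself to obtain~\eqref{eq5}.

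For step (a), write $s=\eta+\mathrm{i}\sigma$ with $\sigma=\Im s\geq\sigma_0>0$ and take $\bw=\widehat\bv$ in~\eqref{eq4}. Since $\bC$ is real and ${\bf K}$ is real, symmetric and positive semi-definite, the elastic energy $a(\widehat\bv):=\int_{\R^3\backslash\Gamma}(\lambda|\dive\widehat\bv|^2+2\mu|\varepsilon(\widehat\bv)|^2)\,d\bx$ and the contact term $k(\widehat\bv):=\langle{\bf K}[\![\widehat\bv]\!],[\![\widehat\bv]\!]\rangle_\Gamma$ are real and nonnegative, so that $A(\widehat\bv,\widehat\bv)=-s^2\|\widehat\bv\|^2_{L^2(\R^3\backslash\Gamma)}+a(\widehat\bv)+k(\widehat\bv)$ and hence
\ben
\Re B(\widehat\bv,\widehat\bv)~=~-\Im\big[\bar s\, A(\widehat\bv,\widehat\bv)\big]~=~\sigma\Big(\|s\widehat\bv\|^2_{L^2(\R^3\backslash\Gamma)}+a(\widehat\bv)+k(\widehat\bv)\Big).
\enn
I would bound this from below by invoking: $\sigma\geq\sigma_0$; the (pointwise) coercivity of $\bC$ on symmetric matrices, $a(\widehat\bv)\geq c\,\|\varepsilon(\widehat\bv)\|^2_{L^2(\R^3\backslash\Gamma)}$ with $c>0$; $k(\widehat\bv)\geq0$; a second Korn inequality on the cracked domain, $\|\nabla\widehat\bv\|^2_{L^2(\R^3\backslash\Gamma)}\leq C_K\big(\|\varepsilon(\widehat\bv)\|^2_{L^2(\R^3\backslash\Gamma)}+\|\widehat\bv\|^2_{L^2(\R^3\backslash\Gamma)}\big)$; and $\|\widehat\bv\|_{L^2}\leq\sigma_0^{-1}\|s\widehat\bv\|_{L^2}$ (valid since $|s|\geq\sigma\geq\sigma_0$). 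Absorbing the lower-order $L^2$ term then yields $\Re B(\widehat\bv,\widehat\bv)\geq\alpha\,\|\widehat\bv\|^2_{H^1_s(\R^3\backslash\Gamma)}$.

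For step (b), each of the three terms of~\eqref{eq4} is bounded by $C\,\|\widehat\bv\|_{H^1_s}\|\bw\|_{H^1_s}$ with $C$ independent of $s\in\mathbb{C}_{\sigma_0}$: the inertial term because $\big|s^2\!\int\ov\bw\cdot\widehat\bv\big|\leq\|s\widehat\bv\|_{L^2}\|s\bw\|_{L^2}\leq\|\widehat\bv\|_{H^1_s}\|\bw\|_{H^1_s}$; the elastic term directly; and the contact term using ${\bf K}\in L^\infty(\Gamma)^{3\times3}$ together with Lemma~\ref{lemma1} and the (continuous, uniform for $|s|\geq\sigma_0$) embedding $\tilde H^{1/2}_s(\Gamma)\hookrightarrow L^2(\Gamma)$. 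Likewise $g(\bw)=\langle\wh\bt^i,[\![\bw]\!]\rangle_\Gamma$ obeys $|g(\bw)|\leq\|\wh\bt^i\|_{H^{-1/2}_s(\Gamma)}\|[\![\bw]\!]\|_{\tilde H^{1/2}_s(\Gamma)}\leq C\,\|\wh\bt^i\|_{H^{-1/2}_s(\Gamma)}\|\bw\|_{H^1_s}$ by the $H^{-1/2}_s$--$\tilde H^{1/2}_s$ duality and Lemma~\ref{lemma1}. Steps (a)--(b) place $B$ and the bounded antilinear functional $\bw\mapsto\mathrm{i}\,\bar s\, g(\bw)$ within the scope of the (complex) Lax--Milgram theorem, which yields a unique $\widehat\bv\in H^1(\R^3\backslash\Gamma)^3$ with $B(\widehat\bv,\bw)=\mathrm{i}\,\bar s\, g(\bw)$ for all $\bw$; since $\mathrm{i}\,\bar s\neq0$ this is exactly~\eqref{eq3}, so~\eqref{eq2} is uniquely solvable. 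Finally, testing with $\bw=\widehat\bv$ gives $\alpha\|\widehat\bv\|^2_{H^1_s}\leq\Re B(\widehat\bv,\widehat\bv)=\Re\big[\mathrm{i}\,\bar s\, g(\widehat\bv)\big]\leq|s|\,|g(\widehat\bv)|\leq C|s|\,\|\wh\bt^i\|_{H^{-1/2}_s(\Gamma)}\|\widehat\bv\|_{H^1_s}$, and dividing by $\|\widehat\bv\|_{H^1_s}$ delivers~\eqref{eq5} with $C(\sigma_0,\Gamma)=C/\alpha$.

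I expect the main obstacle to be the Korn inequality on the \emph{unbounded, cracked} domain $\R^3\backslash\Gamma$ with a non-degenerating constant, and, relatedly, checking that all the frequency-weighted trace and embedding constants used above are uniform over $s\in\mathbb{C}_{\sigma_0}$. I would obtain the Korn estimate by a cutoff localization: on a bounded Lipschitz (cracked) neighbourhood of $\overline{\Gamma}$ the classical second Korn inequality applies, whereas away from $\overline{\text{\sf{D}}}$ the field lies in $H^1(\R^3)^3$, where the constant-free identity $\|\nabla\widehat\bv\|^2_{L^2(\R^3)}=2\|\varepsilon(\widehat\bv)\|^2_{L^2(\R^3)}-\|\dive\widehat\bv\|^2_{L^2(\R^3)}$ holds by integration by parts; the commutator terms produced by the cutoff are of lower order and are absorbed as in step (a). The remaining ingredients --- the computation of $\Re B$, the continuity bounds, and the bookkeeping of constants --- are routine.
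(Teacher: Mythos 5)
Your argument follows the paper's proof essentially step for step: multiply the sesquilinear form by $\mathrm{i}\bar{s}$, take real parts to obtain $H^1_s$-coercivity uniform in $s\in\mathbb{C}_{\sigma_0}$, conclude unique solvability via Lax--Milgram, and then test with $\widehat{\bv}$ and use Lemma~\ref{lemma1} to arrive at \eqref{eq5}. The only difference is that you make explicit the Korn-type inequality on $\R^3\backslash\Gamma$ and the boundedness of the form, which the paper's proof leaves implicit in the inequality $\Re\left(i\bar{s}A(\widehat\bv,\widehat\bv)\right)\geq C\sigma_0\|\widehat\bv\|^2_{H^1_s(\R^3\backslash\Gamma)}$.
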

\begin{proof}
	As mentioned earlier, $\widehat\bv$ solves \eqref{eq2} if and only if \eqref{eq3} is satisfied. Multiplying $A(\widehat\bv,\bw)$ defined in~\eqref{eq4} by $i\bar{s}:=i\eta+\sigma$, taking the real part, and setting $\bw=\widehat\bv$, one obtains
	\ben
	\Re\left(i\bar{s}A(\widehat\bv,\widehat\bv)\right) &=& \sigma\big(\int_{\R^3\backslash\Gamma}|s\widehat\bv|^2\exs d\bx \,+\, \int_{\R^3\backslash\Gamma} \nabla \ov{\widehat\bv}:\bC:\nabla \widehat\bv \exs d\bx \,+\, \int_\Gamma [\![\ov{\widehat\bv}]\!]\cdot{\bf K}\xxs [\![\widehat\bv]\!] \exs dS \big) \\*[0.5mm]
	&\geq& C {\sigma_0} \|\widehat\bv\|^2_{H^1_s(\R^3\backslash\Gamma)}.
	\enn
	{This shows that \eqref{eq2} admits a unique solution.} Further, since $A(\widehat\bv,\widehat\bv) = g(\widehat\bv)$ we have
	\ben
\sigma_0 \|\widehat\bv\|^2_{H^1_s(\R^3\backslash\Gamma)}
	&\leq&C \Re\left(i\bar{s}\int_\Gamma \wh\bt^i\cdot[\![\ov{\widehat\bv}]\!]\exs dS\right)\\*[0.5mm]
	&\leq&C \sqrt{|s|^2\left|\int_\Gamma \wh\bt^i\cdot[\![\ov{\widehat\bv}]\!]\exs dS\right|^2}\\*[1mm]
	&\leq& C {|s|\|\wh\bt^i\|_{H^{-1/2}_s(\Gamma)}\|[\![\widehat\bv]\!]\|_{\tilde{H}^{1/2}_s(\Gamma)}}\\*[1mm]
%	&=&|s| \|t^f\|_{-1/2,s,\Gamma} \|[\![\hat\bv]\!]\|_{1/2,s,\Gamma}\\
	&\leq& C(\sigma_0,\Gamma)|s| \|\wh\bt^i\|_{H^{-1/2}_s(\Gamma)} \|\widehat\bv\|_{H^1_s(\R^3\backslash\Gamma)}.
	\enn
The last inequality comes from Lemma \ref{lemma1}. Thus, the announced estimate \eqref{eq5} is proved.
\end{proof}

For $m\in\R$ and $\sigma\in\R$, we introduce the Hilbert space
\ben
H^m_{\sigma}(\mathbb{R};X)=\left\{f\in\mathcal{L}'_{\sigma}(\mathbb{R};X), \int_{-\infty+i\sigma}^{\infty+i\sigma}|s|^{2m}\|\widehat f(s)\|^2_X \exs ds<\infty\right\},
\enn
endowed with the norm
\be\label{eq14}
\|f\|_{H^m_{\sigma}(\mathbb{R};X)}=\left(\int_{-\infty+i\sigma}^{\infty+i\sigma}|s|^{2m}\|\widehat f(s)\|^2_X \exs ds \right)^{1/2},
\en
see, e.g., \cite{BD1986,sayas2016}. 
\begin{remark}
	For simplicity, we denote $H^{m,1}_{\sigma,\Om}$, $H^{m,\pm 1/2}_{\sigma,\Gamma}$ and $\tilde{H}^{m,\pm 1/2}_{\sigma,\Gamma}$ by $H^m_{\sigma}(\mathbb{R};X)$ with $X={H^1_s(\R^3\backslash\Gamma)^3}$, $X=H^{\pm 1/2}_s(\Gamma)^3$ and $X=\tilde{H}^{\pm 1/2}_s(\Gamma)^3$ in the rest of this paper, respectively. 
\end{remark}

As a consequence of Theorem \ref{thm2} and the use of Laplace transform, one gets
the following result.
\begin{proposition}\label{prop1}
	Let $\sigma_0>0$ and assume that $\bt^i\in H^{m+1,-1/2}_{\sigma,\Gamma}$ for some $m\in\R$. Then problem \eqref{eq1} has a unique solution $\bv\in H^{m,1}_{\sigma,\Om}$ with $\sigma\geq\sigma_0$. Moreover, there exists a constant $C$ depending only on $\sigma_0$ and $\Gamma$ such that 
	\ben
		\|\bv\|_{H^{m,1}_{\sigma,\Om}}\leq C(\sigma_0,\Gamma)\|\bt^i\|_{H^{m+1,-1/2}_{\sigma,\Gamma}}.
	\enn
	for all $\sigma\geq\sigma_0$.
\end{proposition}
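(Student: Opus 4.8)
The plan is to transfer the frequency-domain well-posedness of Theorem~\ref{thm2} back to the time domain by the classical Plancherel--Paley--Wiener argument used in time-domain integral equations, cf.~\cite{BD1986,sayas2016,HLM14}. Fix $\sigma\geq\sigma_0$. By the definition of $H^{m+1,-1/2}_{\sigma,\Gamma}$ recorded in~\eqref{eq14} and the subsequent Remark, the assumption $\bt^i\in H^{m+1,-1/2}_{\sigma,\Gamma}$ means that $\wh\bt^i(\cdot,s)\in H^{-1/2}_s(\Gamma)^3$ for a.e.~$s$ on the line $\Im s=\sigma$, with $\int_{-\infty+i\sigma}^{\infty+i\sigma}|s|^{2(m+1)}\|\wh\bt^i(\cdot,s)\|^2_{H^{-1/2}_s(\Gamma)}\,ds<\infty$. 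Since each such $s$ lies in $\mathbb{C}_{\sigma_0}$, Theorem~\ref{thm2} provides, for a.e.~$s$ on that line, a unique $\wh\bv(\cdot,s)\in H^1(\R^3\backslash\Gamma)^3$ solving~\eqref{eq2}, together with the bound~\eqref{eq5}: $\|\wh\bv(\cdot,s)\|_{H^1_s(\R^3\backslash\Gamma)}\leq C(\sigma_0,\Gamma)\,|s|\,\|\wh\bt^i(\cdot,s)\|_{H^{-1/2}_s(\Gamma)}$.

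The estimate of the proposition is then immediate: squaring~\eqref{eq5}, multiplying by $|s|^{2m}$, and integrating along $\Im s=\sigma$ yields
\[
\int_{-\infty+i\sigma}^{\infty+i\sigma}|s|^{2m}\|\wh\bv(\cdot,s)\|^2_{H^1_s(\R^3\backslash\Gamma)}\,ds\;\leq\;C(\sigma_0,\Gamma)^2\!\int_{-\infty+i\sigma}^{\infty+i\sigma}|s|^{2(m+1)}\|\wh\bt^i(\cdot,s)\|^2_{H^{-1/2}_s(\Gamma)}\,ds,
\]
and by~\eqref{eq14} the left- and right-hand sides are exactly $\|\bv\|^2_{H^{m,1}_{\sigma,\Om}}$ and $C(\sigma_0,\Gamma)^2\|\bt^i\|^2_{H^{m+1,-1/2}_{\sigma,\Gamma}}$, with a constant independent of the particular $\sigma\geq\sigma_0$.

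What remains is to verify that the pointwise-in-$s$ family $\{\wh\bv(\cdot,s)\}$ delivered by Theorem~\ref{thm2} is in fact the Laplace transform of a single causal distribution $\bv\in H^{m,1}_{\sigma,\Om}$ that solves~\eqref{eq1}. For this I would invoke the operator-valued Paley--Wiener theorem: it is enough to check that $s\mapsto\wh\bv(\cdot,s)$ is holomorphic on $\{s\in\C:\Im s>0\}$ and grows at most polynomially in $|s|$ on $\mathbb{C}_{\sigma_0}$. The polynomial growth is precisely~\eqref{eq5}. For holomorphy, observe that the form $A(\cdot,\cdot\,;s)$ in~\eqref{eq4} depends polynomially (hence entirely) on $s$ --- only the term $-s^2\int_{\R^3\backslash\Gamma}\ov\bw\cdot\wh\bv\,d\bx$ carries the $s$-dependence --- and, by the coercivity estimate established in the proof of Theorem~\ref{thm2}, is boundedly invertible on $\{\Im s>0\}$ with inverse of at most linear growth on $\mathbb{C}_{\sigma_0}$; moreover the antilinear functional $\bw\mapsto g(\bw;s)=\langle\wh\bt^i(\cdot,s),[\![\bw]\!]\rangle_\Gamma$ depends holomorphically on $s$, because $\bt^i\in H^{m+1,-1/2}_{\sigma,\Gamma}$ is (again by Paley--Wiener) the trace on $\Im s=\sigma$ of a holomorphic $H^{-1/2}$-valued map. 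Hence $s\mapsto\wh\bv(\cdot,s)=A(\cdot,\cdot\,;s)^{-1}g(\cdot\,;s)$ is holomorphic, the inverse Laplace transform produces $\bv\in\mathcal{L}'_{\sigma_0}(\R;H^1(\R^3\backslash\Gamma)^3)$ with $\bv(\cdot,t)=\bzero$ for $t<0$, and applying $\mathscr{L}^{-1}$ to~\eqref{eq2} shows that $\bv$ solves~\eqref{eq1}; uniqueness in $H^{m,1}_{\sigma,\Om}$ follows from injectivity of the Laplace transform together with the pointwise uniqueness in Theorem~\ref{thm2}.

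I expect the only genuinely non-routine ingredient to be this holomorphy-and-polynomial-growth step --- establishing analyticity of $s\mapsto\wh\bv(\cdot,s)$ throughout the half-plane (not merely on the single vertical line used for the estimate), so that the abstract Laplace-transform machinery applies and returns a causal time-domain solution; the energy inequality and the identification of the $H^{m,1}_{\sigma,\Om}$-norm with the weighted integral are automatic once Theorem~\ref{thm2} is in hand. One small technical point to bear in mind is that the target space $X=H^1_s(\R^3\backslash\Gamma)^3$ carries the $s$-dependent norm, so the Paley--Wiener characterization must be used in the weighted form~\eqref{eq14}; this, however, is exactly the convention under which $H^{m,1}_{\sigma,\Om}$ was defined, so it introduces no real difficulty.
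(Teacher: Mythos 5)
Your proposal is correct and follows exactly the route the paper intends: apply the frequency-domain bound \eqref{eq5} of Theorem~\ref{thm2} pointwise on the line $\Im s=\sigma$, multiply by $|s|^{2m}$ and integrate so that the extra factor $|s|$ accounts for the shift from $m+1$ to $m$, and recover the time-domain solution via Plancherel and the Paley--Wiener theorem (the paper itself states the proposition without a written proof, deferring the holomorphy/causality discussion to Remark~\ref{PW}, which your argument reproduces). No gaps.
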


Now we can define the solution operator $G$ to problem \eqref{eq1} as
\be\label{eqG}
G: H^{m+1,-1/2}_{\sigma,\Gamma} \rightarrow H^{m,1}_{\sigma,\Om}\quad\text{defined by}\quad G(\bt^i)=\bv,
\en
where $\bv\in H^{m,1}_{\sigma,\Om}$ is the unique solution of \eqref{eq1} for $\sigma>0$ and $m\in\R$. Proposition \ref{prop1} ensures that this operator is well defined and bounded.

\begin{remark}\label{PW} 
With reference to the Paley-€"Wiener theorem~\cite[Theorem 1]{H03}, the uniform bound in Theorem~\ref{thm2} with respect to $s\in\mathbb{C}_{\sigma_0}$ and the fact that if $s\mapsto\widehat{\bt}^i(\cdot,s)$  is holomorphic in $\mathbb{C}_{\sigma_0}$ with values in $H^{-1/2}(\Gamma)$ then $s\mapsto\widehat{\bv}(\cdot,s)$ is holomorphic in  $\mathbb{C}_{\sigma_0}$ with values in $L^2(\R^3\backslash\Gamma)$ implies that if $\bt^i$ is causal then the unique solution in Proposition \ref{prop1} is also causal.
\end{remark}

As a consequence of Proposition~\ref{prop1} and Remark~\ref{PW}, one observes
in particular that {if $\chi$ is a $C^{m+2}$-function with compact support then the scattering problem \eqref{eq1} has a unique solution in $H^{m,1}_{\sigma,\Omega}$ with $\sigma>0$}~since ${\bf t}^i = {\bf n}\cdot\left(\bC:\nabla {\bf u}_\chi^i \right)\in H^{m+1,-1/2}_{\sigma,\Gamma}$ for $\chi\in C^{m+2}$. Moreover, if $\chi$ vanishes for $t \leq T$ then the solution
also vanishes for $t \leq T$.  

%~~~~~~~~~~~~~~~~~~~~~~~~~~~~~~~~~~~~~~~~~~~~~~~~~~~~~~~~~~~~~~~~~~~~~~~~~~~~
\section{Factorization of the near-field operator}\lb{NfO}
%~~~~~~~~~~~~~~~~~~~~~~~~~~~~~~~~~~~~~~~~~~~~~~~~~~~~~~~~~~~~~~~~~~~~~~~~~~~~

Let $\chi\in C^{m+2}$ with $m\in\mathbb{N}$ be a smooth excitation function with compact support in time, and define $\bV_\chi\colon \!\!\!=[\bv_{\chi,1}~ \bv_{\chi,2}~ \bv_{\chi,3}]$ wherein $(\bv_{\chi,k})_{k=1,2,3}$ denotes the scattered field solving~\eqref{eq1} for the incident field $\bu_\chi^i(\bx,t;\by,{\bf{e}}_k)_{k=1,2,3}$ in \eqref{eqinc}. In this setting, given the density distribution $\bg\in H^{m,0}_{\sigma,\Gamma_i}$, the near-field operator $N^\chi$ is defined by
\be\label{eqN}
(N^\chi\bg)(\bx,t):=\int_\R\int_{\Gamma_i}\bV^\chi(\bx,t-\tau;\by)\cdot\bg(\by,\tau)d\by \xxs d\tau,\quad (\bx,t)\in\Gamma_m\times\R.
\en

From the linearity of the scattering problem with respect to the incident field, observe that $N^\chi\bg$ in \eqref{eqN} is the trace (on $\Gamma_m$) of the solution to~\eqref{eq1} with the incident $\bu_\chi^i$ replaced by the (regularized) retarded potential $L^\chi_{\Gamma_i}\bg$ as 
\be
\nonumber (L^\chi_{\Gamma_i}\bg)(\bx,t)
&=&\int_\R\int_{\Gamma_i}{\bf U}^i_\chi(\bx,t-\tau;\by)\cdot\bg(\by,\tau)d\by \xxs d\tau\\*[0.5mm]
\label{eq17}&:=&\left[\chi(\cdot)*(L_{\Gamma_i}\bg)(\bx,\cdot)\right](t),\quad (\bx,t)\in\R^3\backslash\Gamma_i\times\R,
\en
wherein ${\bf U}^i_\chi:=[\bu_{\chi,1}~ \bu_{\chi,2}~ \bu_{\chi,3}]$ which may be recast as
\ben
\begin{aligned}
{\bf U}^i_\chi(\bx,t;\by)~=~ & \frac{1}{\mu}\left(\bI_2-\nabla_{\bx}\otimes\nabla_{\bx}\right)
\frac{\chi\left(t-|\bx-\by|/\sqrt{\mu/\rho}\right)}{|\bx-\by|}\\*[0.5mm]
+~ & \frac{1}{\lambda+2\mu}\nabla_{\bx}\otimes\nabla_{\bx}\frac{\chi\left(t-|\bx-\by|/\sqrt{(\lambda+2\mu)/\rho}\right)}{|\bx-\by|},
\end{aligned}
\enn
and
\ben
(L_{\Gamma_i}\bg)(\bx,t)
&=& \int_\R\int_{\Gamma_i}\boldsymbol{\Pi}(\bx,t-\tau;\by)\cdot\bg(\by,\tau)d\by \xxs d\tau\\
&:=& \int_{\Gamma_i}\left[\boldsymbol{\Pi}(\bx,\cdot\,;\by)*\bg(\by,\cdot)\right]\nxs (t) \xxs d\by,\quad (\bx,t)\in(\R^3\backslash\Gamma_i)\times\R.
\enn
From \eqref{eq17}, since $L^\chi_{\Gamma_i}$ is a time convolution operator for the regular density $\chi$ with compact support, we have
\be\label{eq18}
\widehat{L^\chi_{\Gamma_i}\bg}(\bx,s)=\left(\widehat{L^\chi_{\Gamma_i}}(s)\widehat{\bg}(\cdot,s)\right)(\bx)
\en
for $\bx\in\R^3\backslash\Gamma_i$ and $s\in\mathbb{C}$, where
\ben
\widehat{L^\chi_{\Gamma_i}}(s)=\widehat{\chi}(s)\widehat{L_{\Gamma_i}}(s),
\enn
with $\widehat{L_{\Gamma_i}}(s)$ similar to the single layer potential in the frequency domain 
\ben
\left(\widehat{L_{\Gamma_i}}(s)\widehat{\bg}(\cdot,s)\right)\nxs(\bx)
\,=\int_{\Gamma_i}\widehat{\boldsymbol{\Pi}}(\bx,s;\by)\cdot\widehat\bg(\by,s) \xxs d\by,\quad \bx\in\R^3\backslash\Gamma_i.
\enn

\begin{assum}\label{DR}
	The pulse function $\chi:\R\rightarrow\R$ is a non-trivial and causal $C^3$-function such that its Laplace transform is holomorphic in $\mathbb{C}_0$ and has a cubic decay rate,
	\be\label{chi}
	|\widehat{\chi}(s)|\leq\frac{C}{|s|^3},\quad~s\in\mathbb{C}_0.
	\en
\end{assum}

This assumption is not strictly necessary but allows us to use relatively simple function spaces in the main result of this paper. Slower decay rates would essentially change the time regularity of all later results. It should be noted that Assumption~\ref{DR} is satisfied by causal ${C}^3$-functions with compact support.

We also need the following assumption in order to apply some unique continuation argument.

\begin{assum}\label{DRbis}
We assume that $\partial D$ that contains $Gamma$ is an analytic surface. We also assume that $\Gamma_i$ and $\Gamma_m$ are respectively parts of some analytic  boundaries of  simply connected domains $B_i$ and $B_m$ enclosing $D$. In the case $\Gamma_i=\partial B_i$ ($\Gamma_m=\partial B_m$) the boundary $\partial B_i$ ($\partial B_m$)  can be assumed to be only Lipschitz continuous.
\end{assum}
 
From now on in this section and in Section 5, Assumptions~\ref{DR} and~\ref{DRbis} are assumed to hold.

\begin{lemma}\label{slp}
	For $m\in\R$, $\sigma>0$ and $\Om_i:=\R^3\backslash\Gamma_i$, the operator 
	$
		L^\chi_{\Gamma_i} : H^{m,-1/2}_{\sigma,\Gamma_i}\rightarrow H^{m+2,1}_{\sigma,{\Om_i}} 
 	$
	is bounded and injective. Moreover, the operator
	$
		{\rm {tr}}_\Gamma L^\chi_{\Gamma_i} : H^{m,-1/2}_{\sigma,\Gamma_i}\rightarrow \tilde{H}^{m+2,1/2}_{\sigma,\Gamma}
	$
	is bounded, injective with dense range.
\end{lemma}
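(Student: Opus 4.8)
The plan is to treat the two mapping properties separately, and for each to pass through the Laplace transform so that the time-domain statements reduce to $s$-uniform statements about the frequency-domain single-layer potential $\widehat{L_{\Gamma_i}}(s)$, combined with the cubic decay of $\widehat\chi$ from Assumption~\ref{DR}. For boundedness of $L^\chi_{\Gamma_i}:H^{m,-1/2}_{\sigma,\Gamma_i}\to H^{m+2,1}_{\sigma,\Om_i}$, I would first recall the standard elliptic bound for the elastic single-layer potential with complex frequency: for $s\in\mathbb C_{\sigma_0}$ one has $\|\widehat{L_{\Gamma_i}}(s)\widehat\bg(\cdot,s)\|_{H^1_s(\Om_i)}\leq C|s|^{a}\|\widehat\bg(\cdot,s)\|_{H^{-1/2}_s(\Gamma_i)}$ for some fixed power $a$ (this is the Laplace-domain analogue of the classical mapping property, obtainable by the same Lax–Milgram argument used in Theorem~\ref{thm2} applied to the transmission problem defining the single-layer potential, or cited from \cite{HLM14,sayas2016}). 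Multiplying by $\widehat\chi(s)$ and invoking $|\widehat\chi(s)|\leq C/|s|^3$ kills the polynomial growth with room to spare, giving $\|\widehat{L^\chi_{\Gamma_i}\bg}(\cdot,s)\|_{H^1_s(\Om_i)}\leq C|s|^{-1}\|\widehat\bg(\cdot,s)\|_{H^{-1/2}_s(\Gamma_i)}$; integrating $|s|^{2(m+2)}$ times this against $ds$ along $\R+i\sigma$ and using the Plancherel-type definition \eqref{eq14} of the spaces $H^m_\sigma(\R;X)$ yields the claimed bound, where the two extra powers of $|s|$ absorbed into the $H^{m+2}$ index are exactly the gain coming from the last three powers of decay minus the intrinsic growth. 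The trace operator $\mathrm{tr}_\Gamma L^\chi_{\Gamma_i}$ is then bounded by composing with Lemma~\ref{lemma1}.

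For injectivity of $L^\chi_{\Gamma_i}$: if $L^\chi_{\Gamma_i}\bg=0$ then by \eqref{eq18} we get $\widehat\chi(s)\,\widehat{L_{\Gamma_i}}(s)\widehat\bg(\cdot,s)=0$ for a.e.\ $s\in\R+i\sigma$; since $\chi$ is nontrivial and causal its Laplace transform is holomorphic and not identically zero in $\mathbb C_0$, hence $\widehat\chi(s)\neq0$ off a discrete set, so $\widehat{L_{\Gamma_i}}(s)\widehat\bg(\cdot,s)=0$ for a.e.\ $s$. Injectivity of the frequency-domain single-layer potential at each such $s$ — which holds because $\Im s\neq 0$ excludes the real resonances (the associated interior/exterior elastodynamic problems with purely imaginary "frequency" have only the trivial solution, by the coercivity argument as in Theorem~\ref{thm2}) — forces $\widehat\bg(\cdot,s)=0$ a.e., hence $\bg=0$. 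Injectivity of $\mathrm{tr}_\Gamma L^\chi_{\Gamma_i}$ follows similarly once we know $\mathrm{tr}_\Gamma\widehat{L_{\Gamma_i}}(s)$ is injective on $\Gamma$, which uses that a single-layer potential whose jump across the full analytic surface $\partial\mathsf D$ vanishes on the relatively open subset $\Gamma$ must have vanishing density: this is where Assumption~\ref{DRbis} (analyticity of $\partial D$) enters, via unique continuation for the elastic Helmholtz system.

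The main obstacle — and the step I would treat most carefully — is the \emph{dense range} of $\mathrm{tr}_\Gamma L^\chi_{\Gamma_i}$ in $\tilde H^{m+2,1/2}_{\sigma,\Gamma}$. The natural route is duality: I would show that the adjoint operator (with respect to the $L^2$-based duality pairings in $s$, extended over the Laplace contour) is injective, and again reduce this to a frequency-by-frequency statement. At fixed $s$ with $\Im s\neq0$, the $H^{1/2}_s(\Gamma)\to\tilde H^{-1/2}_s(\Gamma)$-adjoint of $\mathrm{tr}_\Gamma\widehat{L_{\Gamma_i}}(s)$ is (up to $\overline{\widehat\chi(s)}$ and a reflection $s\mapsto\bar s$) a map sending a density on $\Gamma$ to the trace on $\Gamma_i$ of the double-layer-type potential it generates; its kernel consists of densities $\bpsi$ whose radiated field has vanishing Cauchy data on $\Gamma_i$. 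Since $\Gamma_i$ lies on the analytic boundary $\partial B_i$ of a domain enclosing $D$, unique continuation (Holmgren/analytic continuation for the elliptic elastodynamic operator, exactly the "unique continuation argument" that Assumption~\ref{DRbis} was introduced to license) propagates the vanishing from $\Gamma_i$ into the whole exterior of $D$, and then a jump-relation argument across $\Gamma$ forces $\bpsi=0$. Assembling these pointwise-in-$s$ injectivity facts into density of the range of the time-domain operator in the weighted space is the bookkeeping part: one must check that the set of $s$ where something degenerates is null and that the weighted $L^2(ds)$ duality is faithful, but no new analytic input is needed beyond the frequency-domain unique continuation and the decay of $\widehat\chi$.
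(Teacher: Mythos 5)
Your proposal follows essentially the same route as the paper's proof: boundedness via the Plancherel-type norm \eqref{eq14} with the $|s|$-power counting from the single-layer bound $\|\widehat{L_{\Gamma_i}}(s)\widehat\bg\|_{H^1_s}\leq C|s|\|\widehat\bg\|_{H^{-1/2}_s(\Gamma_i)}$ against the cubic decay of $\widehat\chi$; injectivity via discreteness of the zeros of $\widehat\chi$ plus frequency-domain injectivity of the single-layer potential; and dense range via injectivity of the adjoint, reduced frequency-by-frequency to a potential with density on $\Gamma$ evaluated on $\Gamma_i$ and handled by unique continuation under Assumption~\ref{DRbis}. The only cosmetic discrepancy is that the paper's adjoint kernel is $\boldsymbol{\Pi}(\bx,-\,\cdot\,;\by)$, i.e.\ a single-layer-type rather than the ``double-layer-type'' potential you describe, but this does not change the structure or validity of the argument.
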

\begin{proof}
	For any $\bg\in H^{m,-1/2}_{\sigma,\Gamma_i}$, using the norm definition given in \eqref{eq14}, we have
	\ben
		\|L^\chi_{\Gamma_i}\bg\|^2_{H^{m+2,1}_{\sigma,\Om_i}}
		&=&\int_{-\infty+i\sigma}^{\infty+i\sigma}|s|^{2(m+2)}\|\widehat{L^\chi_{\Gamma_i}\bg}(\cdot,s)\|^2_{ H^{1}_{s}(\R^3\backslash\Gamma_i)}ds \\	
	&\leq&\int_{-\infty+i\sigma}^{\infty+i\sigma}|s|^{2m+4}{|\widehat{\chi}(s)|^2}\|\widehat{L_{\Gamma_i}}(s)		
		\widehat{\bg}(\cdot,s)\|^2_{ H^{1}_{s}(\R^3\backslash\Gamma_i)}ds.
	\enn
	In addition, given the following (see e.g.,~\cite{Kuprad1979,hsia2020})
		\ben
		\|\widehat{L_{\Gamma_i}}(s)\widehat{\bg}(\cdot,s)\|_{ H^{1}_{s}(\R^3\backslash\Gamma_i)}
	\leq C(\sigma,\Gamma)|s|\|\widehat{\bg}(\cdot,s)\|_{ H^{-1/2}_{s}(\Gamma_i)},
		\enn
		together with assumption~\eqref{chi}, one may deduce that there exists a constant $C$ such that
	\ben
		\|L^\chi_{\Gamma_i}\bg\|^2_{H^{m+2,1}_{\sigma,\Om_i}}\leq C \int_{-\infty+i\sigma}^{\infty+i\sigma}|s|^{2m-2}|s|^2\|\widehat{\bg}(\cdot,s)\|^2_{ H^{-1/2}_{s}(\Gamma_i)}ds 
		= C \|\bg\|^2_{ H^{m,-1/2}_{\sigma,\Gamma_i}}.
	\enn
	The boundedness of the operator ${\rm {tr}}_\Gamma L^\chi_{\Gamma_i}: H^{m,-1/2}_{\sigma,\Gamma_i}\rightarrow \tilde{H}^{m+2,1/2}_{\sigma,\Gamma}$ follows Lemma \ref{lemma1}.
	
	To prove the injectivity of $L^\chi_{\Gamma_i}$, suppose that $L^\chi_{\Gamma_i}\bg=\bzero$ for $\bg\in H^{m,-1/2}_{\sigma,\Gamma_i}$, then
	\ben
		\widehat{\chi}(s)\widehat{L_{\Gamma_i}}(s)\widehat{\bg}(s,\cdot)=\bzero\quad\text{in}~\R^3\backslash\Gamma_i~\text{for a.e.}~s\in\R+i\sigma.
	\enn
	Our assumptions imply in particular that the zeros of $\widehat{\chi}(s),s\in\R+i\sigma$ form an at most
	countable discrete set without finite accumulation point. Hence, $\widehat{L_{\Gamma_i}}(s)\widehat{\bg}(s,\cdot)=\bzero$ in $\R^3\backslash\Gamma_i$
	for a.e. $s\in\R+i\sigma$.	
	Since {the operator $\widehat{L_{\Gamma_i}}(s):H^{-1/2}(\Gamma_i)\rightarrow H^1(\R^3\backslash\Gamma_i)$ is injective for all $s\in\R+i\sigma$ (see~\cite{Kuprad1979})}, we obtain $\widehat{\bg}(s,\cdot)=\bzero$ on $\Gamma_i$ for a.e. $s\in\R+i\sigma$ which implies that $\bg=\bzero$. The injectivity of ${\rm {tr}}_\Gamma L^\chi_{\Gamma_i}$ can be proved in a similar way using the injectivity of the operator ${\rm {tr}}_\Gamma \widehat{L^\chi_{\Gamma_i}}(s)$ for a.e.~$s\in\R+i\sigma$ {(see~\cite{HLM14})}.
	
	Finally, the denseness of the range of ${\rm {tr}}_\Gamma L^\chi_{\Gamma_i}$ can be seen by showing that the $L^2$-adjoint of ${\rm {tr}}_\Gamma L^\chi_{\Gamma_i}$ is injective. Let $A^*$ denote the adjoint of an operator $A$, then
	\ben
	\left({\rm {tr}}_\Gamma L^\chi_{\Gamma_i}\right)^*
	=\,\exs\chi(-\xxs\cdot\xxs)*\left({\rm {tr}}_\Gamma L_{\Gamma_i}\right)^*\,:=\,B,
	\enn
	where
	\ben
	\left[\left({\rm {tr}}_\Gamma L_{\Gamma_i}\right)^*{\bm h}\right](\bx,t)
	\,:={\int_{\Gamma}{\left[\boldsymbol{\Pi}(\bx,-\xxs\cdot\xxs;\by)*{\bm h}(\by,\cdot)\right](t)}d\by},\quad(\bx,t)\in\Gamma_i\times\R.
	\enn
	The injectivity of $\left({\rm {tr}}_\Gamma L^\chi_{\Gamma_i}\right)^*$ on $\left(\tilde{H}^{m+2,1/2}_{\sigma,\Gamma}\right)'\!=H^{-m-2,-1/2}_{-\sigma,\Gamma}$ can be also checked by analyzing the injectivity of the Laplace transform of $\left({\rm {tr}}_\Gamma L^\chi_{\Gamma_i}\right)^*$ on the $\R-i\sigma$ line. 	
	Now, in light of~\eqref{eq18}, define
	\ben
		\widehat{B}(s)=\widehat{\chi}(-s) \xxs {\rm tr}_{\Gamma_i}\widehat{L_\Gamma}(-s),
	\enn
	so that following a similar argument used for the injectivity of ${\rm tr}_\Gamma L^\chi_{\Gamma_i}$, one obtains the desired result as a consequence of the injectivity of ${\rm tr}_{\Gamma_i}\widehat{L_\Gamma}(-s):H^{-1/2}(\Gamma_i)\rightarrow {\tilde H}^{1/2}(\Gamma)$
	for a.e. $s\in\R-i\sigma$ {(see~\cite{HLM14})} which completes the proof.
\end{proof}

\begin{remark}
	Note that in Lemma~\ref{slp}, the space $H^{m,-1/2}_{\sigma,\Gamma_i}$ can be replaced by $H^{m,0}_{\sigma,\Gamma_i}$ since the latter space is continuously embedded in the former one. 
\end{remark}
For $m\in\R$ and $\sigma>0$, denote
\ben
X^m_{\sigma,\Omega}:=\left\{\bu\in H^{m,1}_{\sigma,\Om} : \nabla\cdot\left(\bC:\nabla \bu\right)-\ddot{\bu} = \bzero \quad\text{in}\quad \R^3{\backslash\Gamma}\times\R\right\}, 
\enn
where the differential equation holds in the distributional sense. Then we introduce the free-field traction operator $T_\Gamma:X^m_{\sigma,\Omega}\rightarrow H^{m,-1/2}_{\sigma,\Gamma}$
\ben
T_\Gamma\bu = {\bf n}\cdot\left(\bC:\nabla {\bf u} \right)\big|_\Gamma.
\enn

\begin{lemma} \label{T}
	For $m\in\R$ and $\sigma>0$, the operator 
	$T_\Gamma : X^m_{\sigma,\Omega}\rightarrow H^{{m},-1/2}_{\sigma,\Gamma}$
	is bounded when $X^m_{\sigma,\Omega}$ is equipped with the norm on $H^{m,1}_{\sigma,\Om}$. 
\end{lemma}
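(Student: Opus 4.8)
The plan is to pass to the Laplace transform and reduce the claim to a single bound for the conormal trace of solutions of the time--harmonic Navier equation that is \emph{uniform in} $s$. By the definition~\eqref{eq14} of the norm on $H^{m,-1/2}_{\sigma,\Gamma}$, and since the traction operator acts only in the space variables and hence commutes with $\mathscr{L}$, it suffices to produce a constant $C=C(\sigma,\Gamma)$, independent of $s$ on the line $\Im s=\sigma$, with
\[
\big\| \bn\cdot(\bC:\nabla\widehat\bu)|_\Gamma \big\|_{H^{-1/2}_s(\Gamma)}\;\le\; C\,\|\widehat\bu\|_{H^1_s(\R^3\backslash\Gamma)}
\]
for every $\widehat\bu\in H^1_s(\R^3\backslash\Gamma)^3$ satisfying $\nabla\cdot(\bC:\nabla\widehat\bu)+s^2\widehat\bu=\bzero$ in $\R^3\backslash\Gamma$; squaring, multiplying by $|s|^{2m}$ and integrating over $\R+i\sigma$ then delivers the asserted operator bound, as $|s|^{2m}$ is the only remaining $s$--dependence and occurs identically on both sides.

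For the $s$--uniform estimate I would localise to the bounded domain $\text{\sf{D}}$ with $\Gamma\subset\pa\text{\sf{D}}$. Since $\text{\sf{D}}$ is open and disjoint from $\Gamma$, the restriction $\widehat\bu|_{\text{\sf{D}}}$ belongs to $H^1_s(\text{\sf{D}})^3$ and solves the same equation in $\text{\sf{D}}$; in particular $\nabla\cdot(\bC:\nabla\widehat\bu)=-s^2\widehat\bu\in L^2(\text{\sf{D}})^3$, so $\bC:\nabla\widehat\bu$ is (row--wise) an $H(\dive;\text{\sf{D}})$ field and carries a conormal trace $\bn\cdot(\bC:\nabla\widehat\bu)\in H^{-1/2}(\pa\text{\sf{D}})^3$ characterised by the generalised Green identity
\[
\big\langle \bn\cdot(\bC:\nabla\widehat\bu),\,\bphi\big\rangle_{\pa\text{\sf{D}}}
=\int_{\text{\sf{D}}}\nabla\ov\bw:\bC:\nabla\widehat\bu\;d\bx\,-\,s^2\!\int_{\text{\sf{D}}}\ov\bw\cdot\widehat\bu\;d\bx ,
\]
valid for any lifting $\bw\in H^1_s(\text{\sf{D}})^3$ of $\bphi$ and independent of the lifting, since the difference of two liftings lies in $H^1_0(\text{\sf{D}})^3$, against which the weak form of the equation vanishes. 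By definition, $T_\Gamma\bu$ is the restriction to $\Gamma$ of this $\pa\text{\sf{D}}$--conormal trace (computed from within $\text{\sf{D}}$; the opposite side is handled identically on a bounded collar of $\pa\text{\sf{D}}$ inside $\R^3\backslash\ov{\text{\sf{D}}}$), whence, $H^{-1/2}_s(\Gamma)^3$ being a space of restrictions equipped with the quotient norm, $\|T_\Gamma\bu\|_{H^{-1/2}_s(\Gamma)}\le\|\bn\cdot(\bC:\nabla\widehat\bu)\|_{H^{-1/2}_s(\pa\text{\sf{D}})}$.

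It remains to estimate the last norm from the Green identity. Taking $\bw=E\bphi$ with $E$ a right inverse of the Dirichlet trace, and writing $s^2\,\ov\bw\cdot\widehat\bu=\ov{\bar s\,\bw}\cdot(s\widehat\bu)$, the Cauchy--Schwarz inequality (using that $\bC$ has constant coefficients) yields
\[
\big|\big\langle \bn\cdot(\bC:\nabla\widehat\bu),\,\bphi\big\rangle_{\pa\text{\sf{D}}}\big|
\le C\Big(\|\nabla E\bphi\|_{L^2(\text{\sf{D}})}\|\nabla\widehat\bu\|_{L^2(\text{\sf{D}})}+\|s\,E\bphi\|_{L^2(\text{\sf{D}})}\|s\,\widehat\bu\|_{L^2(\text{\sf{D}})}\Big)
\le C\,\|E\bphi\|_{H^1_s(\text{\sf{D}})}\,\|\widehat\bu\|_{H^1_s(\text{\sf{D}})} .
\]
Dividing by $\|\bphi\|_{H^{1/2}_s(\pa\text{\sf{D}})}$, taking the supremum over $\bphi\neq\bzero$, and using both $\|E\bphi\|_{H^1_s(\text{\sf{D}})}\le C\|\bphi\|_{H^{1/2}_s(\pa\text{\sf{D}})}$ and $\|\widehat\bu\|_{H^1_s(\text{\sf{D}})}\le\|\widehat\bu\|_{H^1_s(\R^3\backslash\Gamma)}$ gives the required $s$--uniform bound, and with it the lemma.

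The main obstacle is precisely the last ingredient: one needs a right inverse $E$ of the trace whose norm is uniform in $s$ along $\Im s=\sigma$ when the spaces carry the $s$--weighted norms of Section~\ref{Wellp}. This is exactly what those frequency--dependent norms are designed for --- in the model half--space, the lifting with Fourier decay rate $(|\xi|^2+|s|^2)^{1/2}$ maps $H^{1/2}_s$ isometrically into $H^1_s$, and a local--chart and partition--of--unity argument transfers this to $\pa\text{\sf{D}}$ with a constant depending only on the domain --- and it follows along the same lines as the uniform trace estimate of Lemma~\ref{lemma1} (see also \cite{McLean2000,BD1986,sayas2016}). Everything else above is then routine.
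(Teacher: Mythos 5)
Your proof is correct and follows the same strategy as the paper: reduce via the Laplace transform and the definition \eqref{eq14} to an $s$-uniform bound for the conormal trace, then use the Navier equation to trade $\nabla\cdot(\bC:\nabla\widehat\bu)$ for $-s^2\widehat\bu$ so that only the $H^1_s$ norm appears. The single difference is that the paper simply cites the $s$-uniform normal-trace inequality $\|{\bf n}\cdot{\bf p}\|_{H^{-1/2}_s(\Gamma)}\le C(\|{\bf p}\|_{L^2}+\|\dive{\bf p}/|s|\|_{L^2})$ from \cite[Proposition 9]{HLM14}, whereas you re-derive it via the generalised Green identity and an $s$-uniformly bounded trace lifting, which is exactly the content of that cited result.
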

\begin{proof}
	For any ${\bf u}\in X^m_{\sigma,\Omega}$, we have
	\ben
		\|T_\Gamma\bu\|^2_{H^{{m},-1/2}_{\sigma,\Gamma}}
	&=&\int_{-\infty+i\sigma}^{\infty+i\sigma}|s|^{2{m}}\|\widehat{T_\Gamma\bu}(\cdot,s)\|^2_{H_s^{-1/2}(\Gamma)}ds\\
	&=&\int_{-\infty+i\sigma}^{\infty+i\sigma}|s|^{2{m}}\|{\bf n}\cdot\left(\bC:\nabla \widehat{\bf u}(\cdot,s) \right)\|^2_{H_s^{-1/2}(\Gamma)}ds.
	\enn
	For $s\in \R+i\sigma$, ${\bf p}\in L^2(\Om)^3$ such that $\rm{div}~ {\bf p}\in L^2(\Om)$ there exists a constant $C$ independent from $s$ such that (see \cite[Proposition 9]{HLM14})
	\ben
	\|{\bf n}\cdot {\bf p}\|_{H^{-1/2}_s(\Gamma)}\leq C\left(\|{\bf p}\|_{L^2(\Om)^3}+\|\rm{div}~ {\bf p}/|s|\|_{L^2(\Om)}\right).
	\enn
	Therefore, for ${\bf u}\in X^m_{\sigma,\Omega}$, using $\nabla\cdot\left(\bC:\nabla \widehat{\bf u}(\cdot,s)\right)=- s^2 \widehat{\bf u} (\cdot,s)$ in $\Om$, we deduce
	\ben
	\|{\bf n}\cdot\left(\bC:\nabla \widehat{\bf u}(\cdot,s) \right)\|^2_{H_s^{-1/2}(\Gamma)}
	&\leq& C\left(\|\bC:\nabla \widehat{\bf u}(\cdot,s)\|_{L^2(\R^3\backslash\Gamma)^3}+\|\nabla\cdot\left(\bC:\nabla \widehat{\bf u}(\cdot,s)\right)/|s|\|_{L^2(\R^3\backslash\Gamma)} \right)\\
	&\leq& C{\|\widehat{\bf u}(\cdot,s)\|^2_{H_s^{1}(\R^3\backslash\Gamma)}}.	
	\enn
	Thus, we have 
	\ben
		\|T_\Gamma\bu\|^2_{H^{{m},-1/2}_{\sigma,\Gamma}}
	\leq C \int_{-\infty+i\sigma}^{\infty+i\sigma}|s|^{2{m}}{
		\|\widehat{\bf u}(\cdot,s)\|^2_{H_s^{1}(\R^3\backslash\Gamma)}}ds
	= C \|\bu\|^2_{H^{{m},1}_{\sigma,\Om}}.
	\enn
	This completes the proof.
\end{proof}

\begin{lemma}\label{lemma10}
	Let $m\in\R$ and $\sigma>0$. If ${\bf w} \in X^m_{\sigma,\Om}$, then there exists a sequence $(\bpsi_n)_{n\in\mathbb{N}}$ in $H^{m-2,0}_{\sigma,\Gamma_i}$ such that
		\ben
			L^\chi_{\Gamma_i}\bpsi_n\rightarrow {\bf w}~\text{in}~H^{m-1,1}_{\sigma,\Om}~\text{as}~n\rightarrow \infty.
		\enn
\end{lemma}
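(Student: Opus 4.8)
The plan is to pass to the Laplace domain and exploit a density/unique-continuation argument there, then lift back. First I would observe that, by the Plancherel-type identity \eqref{eq14} defining the norm on $H^{m-1,1}_{\sigma,\Omega}$, it suffices to approximate $\widehat{\bf w}(\cdot,s)$ in the weighted $L^2$-in-$s$ sense by $\widehat{L^\chi_{\Gamma_i}\bpsi_n}(\cdot,s)=\widehat\chi(s)\,\widehat{L_{\Gamma_i}}(s)\widehat\bpsi_n(\cdot,s)$. Since $\widehat{\bf w}(\cdot,s)$ solves the homogeneous Navier equation $\nabla\cdot(\bC:\nabla\widehat{\bf w})+s^2\widehat{\bf w}=\bzero$ in $\R^3\backslash\Gamma$ for a.e.\ $s\in\R+i\sigma$, the task reduces to the \emph{frequency-domain} statement: the range of the single-layer potential $\widehat{L_{\Gamma_i}}(s):H^{-1/2}(\Gamma_i)\to H^1(\R^3\backslash\Gamma_i)$, restricted to the closed subspace of Navier solutions in a neighbourhood of $\Gamma$, is dense in $X_s:=\{\widehat{\bf u}\in H^1(\R^3\backslash\Gamma)^3:\nabla\cdot(\bC:\nabla\widehat{\bf u})+s^2\widehat{\bf u}=\bzero\}$ for a.e.\ $s$. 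This is the elastic analogue of the classical Herglotz/single-layer density results and is where Assumption~\ref{DRbis} (analyticity of the relevant boundaries, connectedness of the domains $B_i$, $D$) enters, via unique continuation.

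Next I would prove this frequency-domain density by a Hahn–Banach / adjoint argument, mirroring the denseness part of Lemma~\ref{slp}. Suppose ${\bf h}\in X_s'$ annihilates the range; test against $\widehat{L_{\Gamma_i}}(s)\bphi$ for all $\bphi\in H^{-1/2}(\Gamma_i)$. Using the symmetry $\widehat{\boldsymbol\Pi}(\bx,s;\by)=\widehat{\boldsymbol\Pi}(\by,s;\bx)^\top$, the pairing becomes an integral over $\Gamma_i$ of the single-layer potential (with density a representative of ${\bf h}$) against $\bphi$; vanishing for all $\bphi$ forces that potential — call it $\bU$, a Navier solution in $\R^3\backslash(\Gamma\cup\Gamma_i)$ with appropriate decay — to have vanishing trace on $\Gamma_i$. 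Since $\Gamma_i$ lies on the analytic boundary of $B_i\supset D$, unique continuation for the Navier system propagates $\bU\equiv\bzero$ in the unbounded component, hence across $\Gamma_i$, and a jump relation then yields ${\bf h}=\bzero$. (If $\Gamma_i=\partial B_i$ one uses instead that $\bU$ solves an exterior Navier problem with zero Dirichlet data and radiation-type decay inherited from $s\in\R+i\sigma$, $\sigma>0$, giving $\bU\equiv\bzero$ directly.) The injectivity of ${\rm tr}_\Gamma\widehat{L_\Gamma}(-s)$ and of ${\rm tr}_{\Gamma_i}\widehat{L_\Gamma}(-s)$ already invoked in Lemma~\ref{slp} is precisely the kind of statement being reused here.

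Then comes the reassembly into the time domain. Given ${\bf w}\in X^m_{\sigma,\Omega}$ and $\varepsilon>0$, for a.e.\ $s$ I would select $\bpsi(\cdot,s)\in H^{-1/2}(\Gamma_i)$ with $\|\widehat{\bf w}(\cdot,s)-\widehat\chi(s)\widehat{L_{\Gamma_i}}(s)\bpsi(\cdot,s)\|_{H^1_s}$ small; the measurability of such a selection in $s$ can be arranged by a standard measurable-selection argument (or by first approximating $\widehat{\bf w}$ by simple functions in $s$). Using the bound $\|\widehat{L_{\Gamma_i}}(s)\bphi\|_{H^1_s}\le C(\sigma)|s|\,\|\bphi\|_{H^{-1/2}_s(\Gamma_i)}$ from Lemma~\ref{slp} together with the cubic decay $|\widehat\chi(s)|\le C|s|^{-3}$ of Assumption~\ref{DR}, one controls $\|\bpsi\|_{H^{m-2,-1/2}_{\sigma,\Gamma_i}}$ and, via the embedding $H^{m-2,0}_{\sigma,\Gamma_i}\hookrightarrow H^{m-2,-1/2}_{\sigma,\Gamma_i}$ noted after Lemma~\ref{slp}, can regularise $\bpsi$ to lie in $H^{m-2,0}_{\sigma,\Gamma_i}$; a diagonal/truncation procedure in $s$ then produces the sequence $(\bpsi_n)$ with $L^\chi_{\Gamma_i}\bpsi_n\to{\bf w}$ in $H^{m-1,1}_{\sigma,\Omega}$. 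The main obstacle I anticipate is the frequency-domain density step — specifically, making the unique-continuation argument rigorous uniformly (for a.e.\ $s$) and handling the function-space subtleties of Navier solutions near the open arc $\Gamma$ versus on the enclosing analytic surface $\partial D$; the measurable dependence on $s$ is routine but must be stated. The integrability-in-$s$ bookkeeping, by contrast, is straightforward thanks to the built-in decay of $\widehat\chi$.
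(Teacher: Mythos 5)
Your route is genuinely different from the paper's, and the paper's is far more economical. The paper never works fiber-by-fiber in $s$: it takes ${\rm tr}_\Gamma{\bf w}\in \tilde{H}^{m,1/2}_{\sigma,\Gamma}$ (Lemma~\ref{lemma1}), invokes the already-established dense range of ${\rm tr}_\Gamma L^\chi_{\Gamma_i}:H^{m-2,0}_{\sigma,\Gamma_i}\rightarrow \tilde{H}^{m,1/2}_{\sigma,\Gamma}$ from Lemma~\ref{slp} to get $\bpsi_n$ with ${\rm tr}_\Gamma L^\chi_{\Gamma_i}\bpsi_n\rightarrow{\rm tr}_\Gamma{\bf w}$ on $\Gamma$, and then, since both $L^\chi_{\Gamma_i}\bpsi_n$ and ${\bf w}$ solve the homogeneous Navier equation, uses the a priori stability bound of Theorem~\ref{thm2} together with Plancherel to convert trace convergence on $\Gamma$ into volume convergence, $\|L^\chi_{\Gamma_i}\bpsi_n-{\bf w}\|_{H^{m-1,1}_{\sigma,\Om}}\leq C\|{\rm tr}_\Gamma L^\chi_{\Gamma_i}\bpsi_n-{\rm tr}_\Gamma{\bf w}\|_{\tilde{H}^{m,1/2}_{\sigma,\Gamma}}$. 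The factor $|s|$ in \eqref{eq5} is exactly what accounts for the drop from $m$ to $m-1$ in the statement. Your Hahn--Banach/unique-continuation step is therefore re-proving, frequency by frequency, a density that Lemma~\ref{slp} already supplies globally in time, and it misses the key observation that the well-posedness estimate lets you upgrade boundary convergence to volume convergence for free.

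Beyond being longer, your reassembly step has a genuine gap. For a.e.\ $s$ you select $\bpsi(\cdot,s)$ so that $\widehat{\chi}(s)\widehat{L_{\Gamma_i}}(s)\bpsi(\cdot,s)$ approximates $\widehat{\bf w}(\cdot,s)$, but density here comes from an operator with non-closed range: as the fiberwise error is driven to zero, $\|\bpsi(\cdot,s)\|_{H^{-1/2}_s(\Gamma_i)}$ is in general unbounded, and it blows up further near the zeros of $\widehat{\chi}$ on the line $\R+i\sigma$. Your claim that the bound $\|\widehat{L_{\Gamma_i}}(s)\bphi\|_{H^1_s}\leq C|s|\,\|\bphi\|_{H^{-1/2}_s(\Gamma_i)}$ together with the decay of $\widehat{\chi}$ ``controls $\|\bpsi\|_{H^{m-2,-1/2}_{\sigma,\Gamma_i}}$'' is backwards: a forward bound on the operator bounds the image by the preimage, not the preimage by the target it approximates. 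As written, nothing guarantees that the assembled family is square-integrable in $s$ with the weight $|s|^{2(m-2)}$, i.e.\ that it defines an element of $H^{m-2,0}_{\sigma,\Gamma_i}$ at all. This can be repaired by truncating to a compact frequency band $|{\rm Re}\,s|\leq R_n$ (setting $\bpsi=\bzero$ outside and using that the tail $\int_{|s|>R}|s|^{2(m-1)}\|\widehat{\bf w}\|^2\,ds\leq R^{-2}\|{\bf w}\|^2_{H^{m,1}_{\sigma,\Om}}$ vanishes as $R\rightarrow\infty$, which is where the one-order gap is again essential), plus a measurable-selection argument with fiberwise tolerances chosen so that the selected densities stay integrable. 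But at that point you have reconstructed, with considerably more machinery, what the paper obtains in two lines from Lemma~\ref{slp} and Theorem~\ref{thm2}.
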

\begin{proof}
	Let ${\bf w}\in X^m_{\sigma,\Om}$, then from Lemma \ref{lemma1} we have ${\rm tr}_\Gamma{\bf w}\in \tilde{H}^{m,1/2}_{\sigma,\Gamma}$. Lemma \ref{slp} implies that there exists a sequence $(\bpsi_n)_{n\in\mathbb{N}}$ in $H^{m-2,0}_{\sigma,\Gamma_i}$ such that
	\ben
		{\rm tr}_\Gamma L^\chi_{\Gamma_i}(\bpsi_n)\rightarrow {\rm tr}_\Gamma{\bf w}~\text{in}~ \tilde{H}^{m,1/2}_{\sigma,\Gamma}~\text{as}~n\rightarrow\infty.
	\enn
	Note that both $L^\chi_{\Gamma_i}\bpsi_n$ and ${\bf w}$ solve the homogeneous Navier equation in $\Om$. The bounds of Theorem \ref{thm2} combined with Plancherel's identity imply that 
	\ben
		\|L^\chi_{\Gamma_i}\bpsi_n-{\bf w}\|_{H^{m-1,1}_{\sigma,\Om}}
		\leq C \|{\rm tr}_\Gamma L^\chi_{\Gamma_i}(\bpsi_n)-{\rm tr}_\Gamma{\bf w}\|_{\tilde{H}^{m,1/2}_{\sigma,\Gamma}}\rightarrow 0~\text{\,as\,}~n\rightarrow 0.
	\enn
\end{proof}

\begin{proposition}\label{prop3}
	Let $m\in\R$ and $\sigma>0$, then the operator $T_\Gamma L^\chi_{\Gamma_i}: H^{m,-1/2}_{\sigma,\Gamma_i}\rightarrow H^{m+1,-1/2}_{\sigma,\Gamma}$ is bounded, injective and has dense range.
\end{proposition}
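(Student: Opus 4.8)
\textbf{Proof proposal for Proposition~\ref{prop3}.}

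The plan is to factor the operator $T_\Gamma L^\chi_{\Gamma_i}$ through the trace operator and use the results already established in Lemmas~\ref{slp}, \ref{T}, and~\ref{lemma10}. For \emph{boundedness}: given $\bg\in H^{m,-1/2}_{\sigma,\Gamma_i}$, Lemma~\ref{slp} shows $L^\chi_{\Gamma_i}\bg\in H^{m+2,1}_{\sigma,\Om_i}$, and since $L^\chi_{\Gamma_i}\bg$ solves the homogeneous Navier equation off $\Gamma_i\supset$ (a neighborhood of) $\Gamma$, it actually lies in $X^{m+2}_{\sigma,\Om}$ (restricting to $\R^3\backslash\Gamma$); then Lemma~\ref{T} gives $T_\Gamma L^\chi_{\Gamma_i}\bg = T_\Gamma(L^\chi_{\Gamma_i}\bg)\in H^{m+1,-1/2}_{\sigma,\Gamma}$ with the norm bound, so $T_\Gamma L^\chi_{\Gamma_i}$ is bounded (note the index shift: $m\to m+2$ through $L^\chi_{\Gamma_i}$, then no loss through $T_\Gamma$, but the statement claims target index $m+1$; I would double-check that the $|s|$-factor in the estimate of Lemma~\ref{T}'s proof, or in Lemma~\ref{slp}, accounts for this, and present the chain of inequalities accordingly).

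For \emph{injectivity}: suppose $T_\Gamma L^\chi_{\Gamma_i}\bg = \bzero$. Passing to the Laplace transform, $\widehat\chi(s)\,{\bf n}\cdot\bigl(\bC:\nabla \widehat{L_{\Gamma_i}}(s)\widehat\bg(\cdot,s)\bigr)|_\Gamma = \bzero$ for a.e.\ $s\in\R+i\sigma$; since the zeros of $\widehat\chi$ are discrete, the traction of the single-layer potential $\widehat{L_{\Gamma_i}}(s)\widehat\bg(\cdot,s)$ vanishes on $\Gamma$ for a.e.\ $s$. The field $\widehat{\bf u}(s) := \widehat{L_{\Gamma_i}}(s)\widehat\bg(\cdot,s)$ then solves the homogeneous Navier equation in $\R^3\backslash\overline{\text{\sf D}}$ (using the extension surface $\pa\text{\sf D}$ from Section~\ref{Wellp}) with vanishing Neumann data on $\Gamma$; I would invoke the unique continuation principle (this is exactly why Assumption~\ref{DRbis} on analyticity of $\pa\text{\sf D}$ is imposed, as flagged in the text) together with the radiation behavior of the single-layer potential at infinity to conclude $\widehat{\bf u}(s)\equiv \bzero$ in $\R^3\backslash\Gamma_i$. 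Injectivity of $\widehat{L_{\Gamma_i}}(s)$ (from Lemma~\ref{slp}, via \cite{Kuprad1979}) then forces $\widehat\bg(\cdot,s)=\bzero$ on $\Gamma_i$ for a.e.\ $s$, hence $\bg=\bzero$.

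For \emph{dense range}: the cleanest route is to identify $\overline{\text{Range}(T_\Gamma L^\chi_{\Gamma_i})}$ with all of $H^{m+1,-1/2}_{\sigma,\Gamma}$ by combining Lemma~\ref{lemma10} and Lemma~\ref{T}. Concretely, I would show first that $T_\Gamma : X^{m+2}_{\sigma,\Om}\to H^{m+1,-1/2}_{\sigma,\Gamma}$ is surjective (or at least has dense range) — this follows from solvability of the homogeneous Navier equation with prescribed Neumann datum on $\Gamma$, i.e.\ from Theorem~\ref{thm2} applied with ${\bf K}=\bzero$, which produces for each $\wh\bt^i\in H^{-1/2}(\Gamma)$ a field whose traction realizes it. Given an arbitrary target $\bt\in H^{m+1,-1/2}_{\sigma,\Gamma}$, pick ${\bf w}\in X^{m+2}_{\sigma,\Om}$ with $T_\Gamma{\bf w}$ close to $\bt$; by Lemma~\ref{lemma10} there is a sequence $\bpsi_n$ with $L^\chi_{\Gamma_i}\bpsi_n\to{\bf w}$ in $H^{m+1,1}_{\sigma,\Om}$, and then by boundedness of $T_\Gamma$ (Lemma~\ref{T}) we get $T_\Gamma L^\chi_{\Gamma_i}\bpsi_n\to T_\Gamma{\bf w}$ in $H^{m,-1/2}_{\sigma,\Gamma}$; a diagonal/density argument closes the gap.

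\textbf{Main obstacle.} I expect the delicate point to be the unique continuation step in the injectivity argument: one needs that vanishing Neumann data on the \emph{open} patch $\Gamma\subset\pa\text{\sf D}$ (not all of $\pa\text{\sf D}$), for a solution of the homogeneous elastodynamic equation in the exterior of $\text{\sf D}$, forces the solution to vanish identically — this requires the analyticity hypothesis of Assumption~\ref{DRbis} and a careful handling of the behavior at infinity of $\widehat{L_{\Gamma_i}}(s)\widehat\bg$, which is a radiating solution for $\Re(-is)>0$. A secondary bookkeeping issue is tracking the Sobolev indices so that the composition lands in $H^{m+1,-1/2}_{\sigma,\Gamma}$ exactly as claimed, reconciling the "$+2$" gain of $L^\chi_{\Gamma_i}$ with the behavior of $T_\Gamma$; I would make the $|s|$-powers explicit in the norm estimates to be sure the stated index is correct.
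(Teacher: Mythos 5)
Your proposal is correct and follows essentially the same route as the paper: boundedness by composing Lemma~\ref{slp} with Lemma~\ref{T}, injectivity via the Laplace transform, the discreteness of the zeros of $\widehat{\chi}$, the step ``vanishing traction on $\Gamma$ forces the potential to vanish'' (which the paper delegates to a citation of \cite[Lemma 5.3]{Fatemeh2017} and you instead sketch via unique continuation under Assumption~\ref{DRbis} --- correctly identified as the delicate point), and the injectivity of the single-layer operator; and dense range by realizing smooth target tractions by fields in $X^{m+2}_{\sigma,\Om}$, approximating those fields by potentials $L^\chi_{\Gamma_i}\bpsi_{n,l}$ via Lemma~\ref{lemma10}, and passing through the continuity of $T_\Gamma$ with a diagonal argument. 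The only substantive deviations are in your favor: producing the realizing field with ${\bf K}=\bzero$ yields $T_\Gamma\bv_n=-\bts_n$ exactly (the paper solves \eqref{eq1} and asserts $T_\Gamma\bv_n=\bts_n$, which per the transmission condition is really ${\bf K}[\![\bv_n]\!]-\bts_n$), and the index mismatch you flag is real but harmless since $H^{m+2,-1/2}_{\sigma,\Gamma}$ embeds continuously into $H^{m+1,-1/2}_{\sigma,\Gamma}$ for $\sigma>0$.
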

\begin{proof}
	Boundedness follow from the boundedness of $L^\chi_{\Gamma_i}:H^{m,-1/2}_{\sigma,\Gamma_i}\rightarrow H^{m+2,1}_{\sigma,\Om_i}$ and the boundedness of $T_\Gamma: X^{m+2}_{\sigma,\Om_i}\rightarrow H^{m+1,-1/2}_{\sigma,\Gamma}$.
	
	For the injectivity, we assume that $T_\Gamma L^\chi_{\Gamma_i}\bpsi=\bzero$ for some $\bpsi\in H^{m,-1/2}_{\sigma,\Gamma_i}$. Then \cite[Lemma 5.3]{Fatemeh2017} implies 
	$\widehat{L^\chi_{\Gamma_i}}(s)\widehat\bpsi(\xxs\cdot\xxs,s)=\bzero$ in $\R^3\backslash\Gamma$ for a.e.~$s\in\R+i\sigma$. We conclude, as in Lemma \ref{slp}, that $\widehat{L_{\Gamma_i}}(s)\widehat\bpsi(\xxs\cdot\xxs,s)=\bzero$ in $\R^3\backslash\Gamma$ for a.e. $s\in\R+i\sigma$. The jump relation for $\widehat{L_{\Gamma_i}}(s)$ and the injectivity of the single-layer operator (see \cite{Kuprad1979}) shows that $\bpsi=\bzero$.
	
	To prove the denseness of the range of $T_\Gamma L^\chi_{\Gamma_i}$, consider $\bts\in H^{m+1,-1/2}_{\sigma,\Gamma}$. Since the embedding $H^{m+3,-1/2}_{\sigma,\Gamma}$ into $H^{m+1,-1/2}_{\sigma,\Gamma}$ is dense, there exists a sequence $(\bts_n)_{n\in\mathbb{N}}\subset H^{m+3,-1/2}_{\sigma,\Gamma}$ such that $\bts_n\rightarrow\bts$ in $H^{m+1,-1/2}_{\sigma,\Gamma}$. Due to Proposition \ref{prop1}, there exists $\bv_n\rightarrow H^{m+2,1}_{\sigma,\Om}$ such that $\bv_n$ satisfied \eqref{eq1} with $\bt^i=\bts_n$. Lemma \ref{lemma10} states that we can approximate $\bv_n$ by potentials $X^{m+2}_{\sigma,\Om}\ni L^\chi_{\Gamma_i}\bpsi_{n,l}\rightarrow \bv_n$ as $l\rightarrow\infty$ in $H^{m+2,1}_{\sigma,\Om}$ with $(\bpsi_{n,l})_{l\in\mathbb{N}}\subset H^{m,-1/2}_{\sigma,\Gamma_i}$. Finally, the continuity of $T_\Gamma$ from $X^{m+2}_{\sigma,\Om}$ into $H^{m+1,-1/2}_{\sigma,\Gamma}$ shows that 
	\ben
		T_\Gamma L^\chi_{\Gamma_i}\bpsi_{n,l}\rightarrow T_\Gamma \bv_n=\bts_n\quad\text{as}~l\rightarrow \infty~\text{in}~H^{m+1,-1/2}_{\sigma,\Gamma}.
	\enn
	Since, by construction, $\bts_n\rightarrow\bts$ as $n\rightarrow\infty$ in $H^{m+1,-1/2}_{\sigma,\Gamma}$ the proof is complete.
\end{proof}

For $m\in\R$ and $\sigma>0$, we introduce a restriction of this operator to $\Gamma_m$,
\ben
G_{\Gamma_m}:H^{m+1,-1/2}_{\sigma,\Gamma}\rightarrow \tilde{H}^{m,1/2}_{\sigma,\Gamma_m}
\quad\text{defined by}\quad G_{\Gamma_m}(\bt^i)={\rm tr}_{\Gamma_m}G(\bt^i)\big.
\enn
where the solution operator $G$ is defined in~\eqref{eqG}. The well posedness for the forward problem in Proposition \ref{prop1} and the trace theorem in Lemma \ref{lemma1} ensure that this operator is well defined and bounded.

\begin{lemma} \label{gm}
	Let $m\in\R$ and $\sigma>0$, the operator $G_{\Gamma_m}:H^{m+1,-1/2}_{\sigma,\Gamma}\rightarrow \tilde{H}^{m,1/2}_{\sigma,\Gamma_m}$ is injective with dense range.
\end{lemma}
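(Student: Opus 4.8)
The strategy is to establish injectivity and density of the range by the standard duality/unique-continuation argument, mirroring the structure already used for \texttt{tr}$_\Gamma L^\chi_{\Gamma_i}$ in Lemma~\ref{slp} and for $T_\Gamma L^\chi_{\Gamma_i}$ in Proposition~\ref{prop3}. First, for \emph{injectivity}: suppose $G_{\Gamma_m}(\bt^i)=\bzero$, i.e.\ the scattered field $\bv=G(\bt^i)$ has vanishing trace on $\Gamma_m$. Passing to the Laplace transform, $\widehat{\bv}(\cdot,s)$ solves the Navier equation with parameter $s$ in $\R^3\backslash\Gamma$ and vanishes on $\Gamma_m\subset\partial B_m$. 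Since by Assumption~\ref{DRbis} $\partial B_m$ is (locally) analytic and $B_m$ encloses $D$, I would invoke the elastic unique continuation principle across the analytic surface: $\widehat{\bv}(\cdot,s)\equiv\bzero$ in the unbounded component of $\R^3\backslash\overline{B_m}$, hence — by analytic continuation of the solution up to $\partial D\supset\Gamma$ and another unique continuation step through the connected exterior of $D$ — $\widehat{\bv}(\cdot,s)=\bzero$ in all of $\R^3\backslash\overline{D}$. Then the transmission/boundary condition in \eqref{eq2} forces $\widehat{\bt}^i(\cdot,s)=\bn\cdot(\bC:\nabla\widehat{\bv})-{\bf K}[\![\widehat{\bv}]\!]$; but with $\widehat{\bv}$ vanishing on the exterior side one must also control the interior side (in $D\backslash\Gamma$), where $\widehat{\bv}$ solves the homogeneous interior problem with zero Dirichlet-type data on $\partial D$ — provided $s$ is not an interior eigenvalue. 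Because $\sigma=\Im s>0$, Theorem~\ref{thm2}-type coercivity excludes real resonances, so the interior field also vanishes and therefore $[\![\widehat{\bv}]\!]=\bzero$ and $\bn\cdot(\bC:\nabla\widehat{\bv})=\bzero$ on $\Gamma$, giving $\widehat{\bt}^i(\cdot,s)=\bzero$ for a.e.\ $s\in\R+i\sigma$, hence $\bt^i=\bzero$.

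For \emph{dense range}, I would compute the $L^2$-adjoint $G_{\Gamma_m}^*:\big(\tilde{H}^{m,1/2}_{\sigma,\Gamma_m}\big)'=H^{-m,-1/2}_{-\sigma,\Gamma_m}\to H^{-m-1,1/2}_{-\sigma,\Gamma}$ and show it is injective. As in Lemma~\ref{slp}, $G_{\Gamma_m}^*$ is a time-convolution-type operator whose Laplace symbol $\widehat{G_{\Gamma_m}^*}(s)$ (on the line $\R-i\sigma$) is, up to time reversal, the transpose of the frequency-domain solution operator: $\widehat{G_{\Gamma_m}}(s)^\top$ maps a density $\bm h$ on $\Gamma_m$ to the Dirichlet trace on $\Gamma$ of the field radiated by the adjoint (layer-potential) source on $\Gamma_m$. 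Suppose $\widehat{G_{\Gamma_m}}(-s)^\top\bm h=\bzero$ on $\Gamma$ for a.e.\ $s$. The corresponding radiated field $\bw$ then solves the homogeneous Navier equation in $\R^3\backslash(\Gamma_m\cup\Gamma)$, is radiating, and has vanishing Dirichlet data on $\Gamma$; by the same unique-continuation chain (now using analyticity of $\partial B_i$/$\partial D$ from Assumption~\ref{DRbis}) together with the absence of interior resonances for $\Im s\neq 0$, one gets $\bw\equiv\bzero$ on the exterior side of $\Gamma_m$, whence the jump of the single-layer-type potential across $\Gamma_m$ vanishes and $\bm h=\bzero$ by injectivity of the relevant single-layer operator (as cited from \cite{Kuprad1979,HLM14}). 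Reassembling over $s$ via Plancherel yields injectivity of $G_{\Gamma_m}^*$, hence density of $\mathrm{range}(G_{\Gamma_m})$.

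The main obstacle — and the step deserving the most care — is the unique-continuation argument for the elastic (Navier) system across the analytic surfaces $\partial B_m$ and $\partial D$: unlike the scalar Helmholtz case, elastic UCP is delicate, and one must genuinely use the analyticity hypothesis in Assumption~\ref{DRbis} (to run Holmgren/analytic continuation since the Lamé system's lower-order structure obstructs a naive Carleman estimate on Lipschitz boundaries). A secondary technical point is the careful identification of the adjoint $G_{\Gamma_m}^*$ and of its Laplace symbol on the line $\R-i\sigma$, including the bookkeeping of dual spaces $\big(\tilde{H}^{m,1/2}_{\sigma,\Gamma_m}\big)'$ versus $H^{-m,-1/2}_{-\sigma,\Gamma_m}$ and the time-reversal $\chi(\cdot)\mapsto\chi(-\cdot)$, exactly as in the proof of Lemma~\ref{slp}. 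Once these two ingredients are in place, the remaining estimates (boundedness, density of smooth subspaces, passage to the limit) are routine and parallel to Proposition~\ref{prop3}.
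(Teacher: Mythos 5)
Your injectivity argument is essentially the paper's: assume ${\rm tr}_{\Gamma_m}\widehat{\bv}(\cdot,s)=\bzero$, use unique continuation together with unique solvability of the exterior problem at complex frequencies $s\in\R+i\sigma$ to get $\widehat{\bv}(\cdot,s)=\bzero$ in $\Omega$, and then read off $\bt^i=-T_\Gamma\bv+{\bf K}[\![\bv]\!]=\bzero$ from the boundary condition. Your extra detour through an interior eigenvalue argument on $D$ is not needed: $\Gamma$ is an open crack, so $\R^3\backslash\overline{\Gamma}$ is connected and a single unique continuation step already kills $\widehat{\bv}$ on both ``sides'' of $\Gamma$; but this is harmless.

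The density part is where you diverge from the paper, and where your sketch has a genuine gap. You propose to compute $G_{\Gamma_m}^*$ directly and claim its Laplace symbol sends a density $\bm h$ on $\Gamma_m$ to ``the Dirichlet trace on $\Gamma$ of the field radiated by the adjoint layer-potential source on $\Gamma_m$.'' That identification treats $G$ as if it were a layer potential, but $G$ is the \emph{solution operator} of the crack scattering problem: its adjoint requires introducing an adjoint state $\bw$ that is the layer potential from $\Gamma_m$ \emph{plus} the field scattered by $\Gamma$ under the impedance condition, and the Betti/reciprocity identity pairs $\bt^i$ with the displacement \emph{jump} $[\![\bw]\!]$ across $\Gamma$ (the natural duality $\langle H^{-1/2}(\Gamma),\tilde H^{1/2}(\Gamma)\rangle$), not with a one-sided Dirichlet trace of a bare radiated field. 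Consequently your chain ``vanishing Dirichlet data on $\Gamma$ $\Rightarrow$ $\bw\equiv\bzero$ outside $\Gamma_m$ $\Rightarrow$ $\bm h=\bzero$'' does not follow as written: from $[\![\bw]\!]=\bzero$ you must first argue that the crack-scattered part of $\bw$ vanishes, then that the incident layer potential has zero traction on $\Gamma$, and only then run the continuation/no-resonance argument. The paper sidesteps this entirely with a range-inclusion trick: every single-layer potential $L_\Gamma\bg$ with $\bg\in H^{m+1,-1/2}_{\sigma,\Gamma}$ lies in the range of $G$ (with $\bt^i=T_\Gamma L_\Gamma\bg$), so ${\rm range}(G_{\Gamma_m})\supseteq{\rm range}({\rm tr}_{\Gamma_m}L_\Gamma)$, and the latter is dense by the same easy adjoint computation already carried out for ${\rm tr}_\Gamma L^\chi_{\Gamma_i}$ in Lemma~\ref{slp} --- the adjoint of a genuine layer potential, which is transparent. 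Your duality route can be repaired, but as stated the adjoint is misidentified and the key step is missing; I recommend either fixing the adjoint computation along the lines above or adopting the paper's reduction to $L_\Gamma$.
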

\begin{proof}
	Let $\bv=G(\bt^i)$ for $\bt^i\in H^{m+1,-1/2}_{\sigma,\Gamma}$. Assume that 
	$G_{\Gamma_m}(\bt^i)=\bzero$. Then, due to our assumptions on $\Gamma_m$ that is either a closed Lipschitz surface or an analytic open surface, the unique continuation property and unique solvability of exterior scattering problems at complex frequencies in $\R + i\sigma$ imply that $\widehat{\bv}(\cdot,s) = \bzero$ in $\Omega$ for a.e. $s\in\R+i\sigma$. This implies that $\bt^i = -T_\Gamma \bv+K[\![\bv]\!]=\bzero$ by Lemmas~\ref{lemma1} and~\ref{T}.
	
	Now we prove the denseness of the range of $G_{\Gamma_m}$. We observe that the range of $G_{\Gamma_m}$ contains ${\rm tr}_{\Gamma_m}{\bf u}$ where
	\ben
		{\bf u}(\bx,t):=(L_{\Gamma}\bg)(\bx,t)=\int_\R\int_{\Gamma}\boldsymbol{\Pi}(\bx,t-\tau;\by)\cdot\bg(\by,\tau)d\by \tau,\quad(\bx,t)\in\R^3\backslash\Gamma\times\R
	\enn
	for some density $\bg\in  H^{m+1,-1/2}_{\sigma,\Gamma}$. This simply comes from the
	fact that ${\bf u}:=L_{\Gamma}\bg\in H^{m,1}_{\sigma,\Omega}$ for $\bg\in  H^{m+1,-1/2}_{\sigma,\Gamma}$ (see Lemma \ref{slp}) and it can be written as ${\bf{u}}=G(\bt^i)$ with $\bt^i:=T_\Gamma L_{\Gamma}\bg\in H^{m+1,-1/2}_{\sigma,\Gamma}$. Following the last part of the proof of Lemma \ref{slp}, one may show that 
	\ben
		{\rm tr}_{\Gamma_m} L_\Gamma:H^{m+1,-1/2}_{\sigma,\Gamma}\rightarrow \tilde{H}^{m,1/2}_{\sigma,\Gamma_m}
	\enn 
	has dense range. This concludes the proof.
\end{proof}

%\subsection{A theoretical result motivating time domain sampling algorithms}
For $\by\in\Gamma_i$, we consider the incident field $\bu^i_\chi$ given in \eqref{eqinc} which belongs to $X^{m+1,1}_{\sigma,\Om}$. Due to Lemma~\ref{T}, we infer that
\ben
\bt^i:=T_\Gamma \bu^i_\chi\in H^{{m+1},-1/2}_{\sigma,\Gamma}.
\enn
In consequence, Proposition \ref{prop1} implies that the scattered field $\bv(\cdot,\cdot;\by)$ is well defined in $H^{m,1}_{\sigma,\Om}$ and the trace theorem
\ref{lemma1} implies that ${\rm tr}_{\Gamma_m}\bv(\cdot,\cdot;\by)$ is well defined in
$\tilde{H}^{m,1/2}_{\sigma,\Gamma}$. Since ${\rm tr}_{\Gamma_m}\bv(\cdot,\cdot;y)=G_{\Gamma_m}T_\Gamma \bu^i_\chi$, the linear combination of several incident pulses produces the corresponding linear combination of the measurements. Therefore, for regular densities $\bg$, the near-field operator $N^\chi$ simply satisfies
\ben
(N^\chi\bg)(\bx,t)
&=&\int_\R\int_{\Gamma_i}\bV^\chi(\bx,t-\tau;\by)\cdot\bg(\by,\tau)d\by d\tau\\
&=& G_{\Gamma_m}T_\Gamma\left(\int_\R\int_{\Gamma_i}{\bf U}^\chi(\cdot,\cdot-\tau;\by)\cdot\bg(\by,\tau)d\by d\tau\right)(\bx,t)\\
&=& G_{\Gamma_m}T_\Gamma L^\chi_{\Gamma_i}\bg(\bx,t),\quad (\bx,t)\in\Gamma_m\times\R.
\enn
Therefore, the factorization of $N^\chi$ can be written as
\ben
N^\chi = G_{\Gamma_m}T_\Gamma L^\chi_{\Gamma_i}.
\enn
\begin{proposition} \label{prop2}
	Let $m\in\R$ and $\sigma>0$, the operator $N^\chi$ is bounded, injective and has dense range from $H^{m,-1/2}_{\sigma,\Gamma_i}$ to $\tilde{H}^{m,1/2}_{\sigma,\Gamma_m}$. 
\end{proposition}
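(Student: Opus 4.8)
The plan is to deduce all three properties directly from the factorization $N^\chi=G_{\Gamma_m}T_\Gamma L^\chi_{\Gamma_i}$ just established, invoking the mapping properties of the two factors $T_\Gamma L^\chi_{\Gamma_i}$ and $G_{\Gamma_m}$. \textbf{Boundedness.} By Proposition~\ref{prop3} the operator $T_\Gamma L^\chi_{\Gamma_i}\colon H^{m,-1/2}_{\sigma,\Gamma_i}\to H^{m+1,-1/2}_{\sigma,\Gamma}$ is bounded, and (as noted just before Lemma~\ref{gm}, from Proposition~\ref{prop1} together with the trace theorem, Lemma~\ref{lemma1}) the operator $G_{\Gamma_m}\colon H^{m+1,-1/2}_{\sigma,\Gamma}\to\tilde H^{m,1/2}_{\sigma,\Gamma_m}$ is bounded; composing the two yields that $N^\chi\colon H^{m,-1/2}_{\sigma,\Gamma_i}\to\tilde H^{m,1/2}_{\sigma,\Gamma_m}$ is bounded. \textbf{Injectivity.} If $N^\chi\bg=\bzero$ for some $\bg\in H^{m,-1/2}_{\sigma,\Gamma_i}$, then $G_{\Gamma_m}\big(T_\Gamma L^\chi_{\Gamma_i}\bg\big)=\bzero$ with $T_\Gamma L^\chi_{\Gamma_i}\bg\in H^{m+1,-1/2}_{\sigma,\Gamma}$; injectivity of $G_{\Gamma_m}$ (Lemma~\ref{gm}) forces $T_\Gamma L^\chi_{\Gamma_i}\bg=\bzero$, and injectivity of $T_\Gamma L^\chi_{\Gamma_i}$ (Proposition~\ref{prop3}) then gives $\bg=\bzero$.

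\textbf{Dense range.} Here one should not merely ``compose dense ranges'' but combine the density of $\mathrm{Ran}(T_\Gamma L^\chi_{\Gamma_i})$ in $H^{m+1,-1/2}_{\sigma,\Gamma}$ (Proposition~\ref{prop3}), the continuity of $G_{\Gamma_m}$, and the density of $\mathrm{Ran}(G_{\Gamma_m})$ in $\tilde H^{m,1/2}_{\sigma,\Gamma_m}$ (Lemma~\ref{gm}). Given $\bphi\in\tilde H^{m,1/2}_{\sigma,\Gamma_m}$ and $\varepsilon>0$, I would first choose $\bt^i\in H^{m+1,-1/2}_{\sigma,\Gamma}$ with $\|G_{\Gamma_m}\bt^i-\bphi\|<\varepsilon/2$, and then choose $\bg\in H^{m,-1/2}_{\sigma,\Gamma_i}$ with $\|T_\Gamma L^\chi_{\Gamma_i}\bg-\bt^i\|$ small enough that $\|G_{\Gamma_m}(T_\Gamma L^\chi_{\Gamma_i}\bg-\bt^i)\|<\varepsilon/2$ (possible by continuity of $G_{\Gamma_m}$ and density of $\mathrm{Ran}(T_\Gamma L^\chi_{\Gamma_i})$); the triangle inequality gives $\|N^\chi\bg-\bphi\|<\varepsilon$, so $\mathrm{Ran}(N^\chi)$ is dense in $\tilde H^{m,1/2}_{\sigma,\Gamma_m}$.

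No genuine obstacle remains at this stage: the analytic substance --- the mapping estimates for the regularized retarded potential $L^\chi_{\Gamma_i}$ and for the traction operator $T_\Gamma$, the well-posedness of the forward problem, and the unique-continuation and single-layer-injectivity arguments underlying the injectivity and dense range of $G_{\Gamma_m}$ and of $T_\Gamma L^\chi_{\Gamma_i}$ --- has already been carried out in Lemmas~\ref{slp},~\ref{T},~\ref{gm} and Propositions~\ref{prop1},~\ref{prop3}. The only points requiring a little care are keeping the Sobolev exponents $m,\,m+1,\,m+2$ aligned correctly along the threefold composition, and, in the dense-range step, using the continuity of the outer factor $G_{\Gamma_m}$ to transport the approximation from $H^{m+1,-1/2}_{\sigma,\Gamma}$ onto $\tilde H^{m,1/2}_{\sigma,\Gamma_m}$ rather than naively multiplying dense ranges together.
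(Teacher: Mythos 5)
Your proof is correct and follows essentially the same route as the paper, which simply states that the result follows from the factorization $N^\chi = G_{\Gamma_m}T_\Gamma L^\chi_{\Gamma_i}$ together with the preceding lemmas. Your write-up is in fact a more careful version of that argument --- in particular the $\varepsilon/2$ handling of the dense-range step, and the explicit appeal to Proposition~\ref{prop3} for the properties of $T_\Gamma L^\chi_{\Gamma_i}$, are details the paper leaves implicit.
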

\begin{proof}
	This follows from the factorization $N^\chi = G_{\Gamma_m}T_\Gamma L^\chi_{\Gamma_i}$ and Lemma \ref{slp}, \ref{T} and \ref{gm}.
\end{proof}
\begin{remark}
	With the notation $L^2_{\sigma,\Gamma}=H^{0,0}_{\sigma,\Gamma}$, Proposition \ref{prop2} in particular implies that for all $\sigma>0$,
	\ben
		N^\chi: L^2_{\sigma,\Gamma_i}\rightarrow L^2_{\sigma,\Gamma_m}
	\enn 
	is bounded and injective with dense range.
\end{remark}

%%%%%%%%%%%%%%%%%%%%%%%%%%%%%%%%%%%%%%%%%%%%%%%%%%%%%%%%%%%%%%%%%%%%%%%%%%%%%%%%%
%-----------------------------------------------------------------------------------------------------------------------------------------------------------------------
\section{Inverse solution}\lb{INS}
%-----------------------------------------------------------------------------------------------------------------------------------------------------------------------
The main idea is to construct an approximate solution to the near field equation
\be\label{eq12}
(N^\chi\bg_L)(\bx,t) \,=\, {\bm\phi}^\bzeta_L(\bx,t; {\bf d},t_\circ),\quad (\bx,t)\in\Gamma_m\!\times\R,
\en
where ${\bm\phi}^\bzeta_L$ can be interpreted as a trial radiating field affiliated with the admissible density $\bzeta\in \tilde{H}^{{m},1/2}_{\sigma,L}$ specified 
over a smooth, non-intersecting trial fracture $L$ given by
\be\label{eq13}
{\bm\phi}^\bzeta_L(\bx,t; {\bf d},t_\circ)\exs:=\int_\R\int_L \boldsymbol{T}(\bx,\tau;\by,t_\circ){\bf d}\cdot\bzeta(t-\tau,\by) \exs d\by \xxs d\tau\quad (\bx,t)\in\R^3\backslash L\times\R,
\en
where ${\bf d}\in\R^3$ is the polarization direction, and $t_\circ$ represents a starting time. Moreover, the fundamental normal traction $\boldsymbol{T}$ is defined by
\ben
\boldsymbol{T}(\bx,t;\by,t_\circ):= \partial\Pi(\bx,t;\by,t_\circ)/{\partial \bm n}(\by)\quad (\by,t)\in L\times\R.
\enn

%\ben
%{\Pi}_\zeta(x,t;y)= \frac{1}{\mu}\left(I-\nabla_x\otimes\nabla_x\right)
%\frac{\zeta\left(t-|x-y|/\sqrt{\mu/\rho}\right)}{|x-y|}
%+ \frac{1}{\lambda+2\mu}\nabla_x\otimes\nabla_x\frac{\zeta\left(t-|x-y|/\sqrt{(\lambda+2\mu)/\rho}\right)}{|x-y|}.
%\enn
 The fundamental theorem of linear sampling indicates that the norm of $\bg_L$ is unbounded when $L\not\subset\Gamma$. Hence, one can construct an image of the hidden fracture $\Gamma$ by plotting $L\mapsto 1/\|\bg_L\|$ in the sampling region. 
\begin{theorem} 
	Let $\sigma>0$, $t_\circ\in\R$, ${\bf d}\in\R^3$, and some density $\bzeta\in \tilde{H}^{{m},1/2}_{\sigma,L}$, then
	\begin{enumerate}
		\item 
		For $L\subset\Gamma$, there exists a density vector $\bg_{L,\epsilon}\in L^2_{\sigma,\Gamma_i}$ such that 
		$\|N^\chi\bg_{L,\epsilon}-{\bm\phi}^\bzeta_L\|_{L^2_{\sigma,\Gamma_m}}\!\nxs\leq\epsilon$ and 
		$\lim_{\epsilon\rightarrow 0}\|T_\Gamma L^\chi_{\Gamma_i}\bg_{L,\epsilon}\|_{H^{1,-1/2}_{\sigma,\Gamma}}<\infty.$
		\item 
		For $L\not\subset\Gamma$, for all density vectors $\bg_{L,\epsilon}\in L^2_{\sigma,\Gamma_i}$ such that
		$\|N^\chi\bg_{L,\epsilon}-{\bm\phi}^\bzeta_L\|_{L^2_{\sigma,\Gamma_m}}\!\!\leq\epsilon$, one has 
		$\lim_{\epsilon\rightarrow 0}\|T_\Gamma L^\chi_{\Gamma_i}\bg_{L,\epsilon}\|_{H^{1,-1/2}_{\sigma,\Gamma}}=\infty.$
	\end{enumerate}
\end{theorem}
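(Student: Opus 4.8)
The plan is to follow the classical linear‑sampling argument, which hinges on the factorization $N^\chi = G_{\Gamma_m} T_\Gamma L^\chi_{\Gamma_i}$ established in Section~\ref{NfO}, together with the characterization of when the trial field ${\bm\phi}^\bzeta_L$ lies in the range of $G_{\Gamma_m}$. The first task is to identify ${\bm\phi}^\bzeta_L$ as the trace on $\Gamma_m$ of the scattered field radiated by the trial fracture $L$: inspecting~\eqref{eq13}, ${\bm\phi}^\bzeta_L(\cdot,\cdot;{\bf d},t_\circ)$ solves the homogeneous Navier equation in $\R^3\backslash L$ with the appropriate causality, so it belongs to $X^{m}_{\sigma,\R^3\backslash L}$ and its restriction to $\Gamma_m$ is well defined in $\tilde H^{m,1/2}_{\sigma,\Gamma_m}$ by Lemma~\ref{lemma1}. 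The crucial dichotomy is: \emph{${\bm\phi}^\bzeta_L|_{\Gamma_m}\in\mathrm{Range}(G_{\Gamma_m})$ if and only if $L\subset\Gamma$.} One direction is easy — if $L\subset\Gamma$, then ${\bm\phi}^\bzeta_L$ is (up to choosing $\bt^i:=T_\Gamma{\bm\phi}^\bzeta_L\in H^{m+1,-1/2}_{\sigma,\Gamma}$, which is legitimate by Lemma~\ref{T}) exactly $G(\bt^i)$ on the exterior, so ${\bm\phi}^\bzeta_L|_{\Gamma_m}=G_{\Gamma_m}\bt^i$. The converse uses the unique continuation / analyticity hypotheses of Assumption~\ref{DRbis}: if ${\bm\phi}^\bzeta_L|_{\Gamma_m}=G_{\Gamma_m}\bt^i$ for some $\bt^i$, then the two exterior solutions agree on $\Gamma_m$, hence (by unique solvability of the exterior problem at complex frequencies and unique continuation, applied after Laplace transform for a.e.\ $s\in\R+i\sigma$) coincide in the unbounded component of the complement of $L\cup\Gamma$; if some portion of $L$ were outside $\Gamma$, the singularity of the fundamental normal traction $\boldsymbol T$ on that portion of $L$ would contradict the regularity of $G(\bt^i)$ there.

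Granting this dichotomy, the two items follow from a standard functional‑analytic lemma on approximate solutions of operator equations with a dense‑range, injective operator admitting a factorization through an injective middle/left factor (cf.\ the GLSM/LSM literature, e.g.~\cite{Audibert2014,Fatemeh2017}). Concretely, write $\Phi:=G_{\Gamma_m}T_\Gamma L^\chi_{\Gamma_i}=N^\chi$ and note $G_{\Gamma_m}$ is injective with dense range (Lemma~\ref{gm}), $L^\chi_{\Gamma_i}$ and $T_\Gamma L^\chi_{\Gamma_i}$ are injective with dense range (Lemma~\ref{slp}, Proposition~\ref{prop3}), and $N^\chi$ is injective with dense range (Proposition~\ref{prop2}). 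For item~1: when $L\subset\Gamma$, pick $\bt^i_L:=T_\Gamma{\bm\phi}^\bzeta_L$ so that $G_{\Gamma_m}\bt^i_L={\bm\phi}^\bzeta_L|_{\Gamma_m}$; since $\bt^i_L$ lies in the range of $T_\Gamma L^\chi_{\Gamma_i}$ only approximately (that range is merely dense), choose $\bg_{L,\epsilon}\in L^2_{\sigma,\Gamma_i}$ with $\|T_\Gamma L^\chi_{\Gamma_i}\bg_{L,\epsilon}-\bt^i_L\|_{H^{1,-1/2}_{\sigma,\Gamma}}\le\epsilon'$, where $\epsilon'=\epsilon'(\epsilon)\to0$; boundedness of $G_{\Gamma_m}$ gives $\|N^\chi\bg_{L,\epsilon}-{\bm\phi}^\bzeta_L\|_{L^2_{\sigma,\Gamma_m}}\le\epsilon$, and the approximating quantities $T_\Gamma L^\chi_{\Gamma_i}\bg_{L,\epsilon}\to\bt^i_L$ stay bounded in $H^{1,-1/2}_{\sigma,\Gamma}$, which is the stated conclusion. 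For item~2: if $L\not\subset\Gamma$ then ${\bm\phi}^\bzeta_L|_{\Gamma_m}\notin\mathrm{Range}(G_{\Gamma_m})$, so for any $\bg_{L,\epsilon}$ with $\|N^\chi\bg_{L,\epsilon}-{\bm\phi}^\bzeta_L\|_{L^2_{\sigma,\Gamma_m}}\le\epsilon$, the elements $w_\epsilon:=T_\Gamma L^\chi_{\Gamma_i}\bg_{L,\epsilon}$ satisfy $G_{\Gamma_m}w_\epsilon\to{\bm\phi}^\bzeta_L|_{\Gamma_m}$; were $\|w_\epsilon\|_{H^{1,-1/2}_{\sigma,\Gamma}}$ bounded along a subsequence, a weak limit $w_*$ would yield $G_{\Gamma_m}w_*={\bm\phi}^\bzeta_L|_{\Gamma_m}$ by weak continuity of the bounded operator $G_{\Gamma_m}$, contradicting that ${\bm\phi}^\bzeta_L|_{\Gamma_m}\notin\mathrm{Range}(G_{\Gamma_m})$; hence $\|w_\epsilon\|_{H^{1,-1/2}_{\sigma,\Gamma}}\to\infty$.

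A couple of technical points need care. First, the weak‑limit argument in item~2 requires that $H^{1,-1/2}_{\sigma,\Gamma}$ be a Hilbert (or at least reflexive) space so bounded sequences have weakly convergent subsequences — this is the case by the definition~\eqref{eq14} — and that $G_{\Gamma_m}$ be weakly sequentially continuous, which holds since it is bounded and linear. Second, one must legitimately pass from the $L^2_{\sigma}$‑level statements in the theorem to the $H^{m}_\sigma$‑level mapping properties proven in Section~\ref{NfO}; this is handled by the continuous embeddings $H^{m,-1/2}_{\sigma,\Gamma_i}\hookleftarrow H^{m,0}_{\sigma,\Gamma_i}$ and the remark following Proposition~\ref{prop2} that $N^\chi:L^2_{\sigma,\Gamma_i}\to L^2_{\sigma,\Gamma_m}$ is injective with dense range, so the abstract lemma applies verbatim at the $L^2_\sigma$ level while the auxiliary quantity is measured in $H^{1,-1/2}_{\sigma,\Gamma}$ via $T_\Gamma L^\chi_{\Gamma_i}$.

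I expect the main obstacle to be the converse half of the range dichotomy — proving that $L\not\subset\Gamma$ forces ${\bm\phi}^\bzeta_L|_{\Gamma_m}\notin\mathrm{Range}(G_{\Gamma_m})$. The delicate issues there are: (i) invoking unique continuation for the Navier system at complex frequencies $s\in\R+i\sigma$, which is where the analyticity of $\partial D$ and of the carrier surfaces in Assumption~\ref{DRbis} is genuinely used; (ii) handling the fact that $L$ and $\Gamma$ are \emph{open} surfaces with possibly complicated mutual position, so that "$L\not\subset\Gamma$" must be argued via the non‑removable singularity of the double‑layer‑type kernel $\boldsymbol T$ on $L\setminus\overline\Gamma$; and (iii) ensuring the argument is uniform/measurable in $s$ so that the frequency‑domain contradiction transfers back to the time domain through the Laplace transform. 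Everything else is bookkeeping with the weighted spaces $H^{m,\cdot}_{\sigma,\cdot}$ and repeated appeals to Lemmas~\ref{lemma1}, \ref{slp}, \ref{T}, \ref{gm} and Propositions~\ref{prop3}, \ref{prop2}.
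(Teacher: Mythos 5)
Your argument follows the paper's proof essentially step for step: for $L\subset\Gamma$ you realize ${\bm\phi}^\bzeta_L$ as $G(\bt^i_L)$ and then approximate $\bt^i_L$ via the dense range of $T_\Gamma L^\chi_{\Gamma_i}$ (Proposition~\ref{prop3}) together with the continuity of $G_{\Gamma_m}$, and for $L\not\subset\Gamma$ you run the same weak-compactness contradiction, concluding with the non-removable jump of $\widehat{{\bm\phi}^\bzeta_L}$ across $L\setminus\overline{\Gamma}$ against the analyticity of $\widehat{\bv}$ there. The one slip is your explicit traction: the boundary condition in \eqref{eq1} forces $\bt^i_L={\bf K}\bzeta-T_\Gamma{\bm\phi}^\bzeta_L$ rather than $\bt^i_L=T_\Gamma{\bm\phi}^\bzeta_L$ (the paper instead inverts the traction-to-jump map via \cite[Lemma 5.6]{Fatemeh2017}), but this does not change the structure of the argument.
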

\begin{proof}
	Assume that $L\subset\Gamma$, then ${\bm\phi}^\bzeta_L\in H^{m,1}_{\sigma,\R^3\backslash L}\subset H^{m,1}_{\sigma,\Om}$ for  $\bzeta\in \tilde{H}^{{m},1/2}_{\sigma,L}$ by definition \eqref{eq13} and the property of the double-layer potential given in \cite{hsia2020,Kuprad1979}. By extending $\bzeta$ from $L$ to $\Gamma$ through zero padding, we have $\bzeta\in \tilde{H}^{{m},1/2}_{\sigma,\Gamma}$. From the well-posedness of the forward scattering problem {and the fact that $T_\Gamma(\bt^i)=[\![\bv]\!]=\bzeta$ from $H^{{m-1},-1/2}_{\sigma,\Gamma}$ to $\tilde{H}^{{m},1/2}_{\sigma,\Gamma}$ has a bounded inverse (see \cite[Lemma 5.6]{Fatemeh2017})}, we know that ${\bm\phi}^\bzeta_L$ is the unique causal solution to problem \eqref{eq1} with the boundary data $\bt^i_L= {T_\Gamma\nxs}^{-1} \bzeta\in  H^{{m-1},-1/2}_{\sigma,\Gamma}$  and
	\ben
		G_{\Gamma_m}\bt^i_L \,=\, {\bm\phi}^\bzeta_L\quad\text{on}~\Gamma_m\nxs\times\R.
	\enn
	Thus, one may approximate $\bt^i_L$, thanks to the denseness of the range of 
	 $T_\Gamma L^\chi_{\Gamma_i}$ given in Proposition~\ref{prop3}, such that for $\epsilon>0$, there exists $\bg_{L,\epsilon}\in L^2_{\sigma,\Gamma_i}$ such that
	\ben
	\| T_\Gamma {L_{\Gamma_i}^\chi}\bg_{L,\epsilon}-\bt^i_L \|_{H^{{m-1},-1/2}_{\sigma,\Gamma}}\leq\epsilon\quad\land\quad \lim_{\epsilon\rightarrow 0}\| T_\Gamma L^\chi_{\Gamma_i}\bg_{L,\epsilon}\|_{H^{m-1,-1/2}_{\sigma,\Gamma}}<\infty.
	\enn
	{The continuity of $G_{\Gamma_m}$ from $H^{m+1,-1/2}_{\sigma,\Gamma}$ into $\tilde{H}^{m,1/2}_{\sigma,\Gamma_m}$} implies that
	\ben
		\|N^\chi\bg_{L,\epsilon}-{\bm\phi}^\bzeta_L\xxs\|_{L^2_{\sigma,\Gamma_m}}
	\!\leq\, C\|N^\chi\bg_{L,\epsilon}-{\bm\phi}^\bzeta_L\xxs \|_{\tilde{H}^{0,1/2}_{\sigma,\Gamma_m}}
	\exs\leq\, C\| T_\Gamma L_{\Gamma_i}^\chi \bg_{L,\epsilon}-\bt^i_L \|_{H^{1,-1/2}_{\sigma,\Gamma}}
	\leq C \epsilon.
	\enn
	
	Now consider the case $L\not\subset\Gamma$, we argue by contradiction and assume that there is a positive sequence $(\epsilon_n)_{n\in\mathbb{N}}$ and suppose that there exists $C>0$ such that  
	\be\label{eq15}
		\|T_\Gamma L^\chi_{\Gamma_i}\bg_{L,\epsilon_n}\|_{H^{1,-1/2}_{\sigma,\Gamma}}\leq C.
	\en
	Hence, there is a weakly convergent subsequence $\bt^i_n:= T_\Gamma L^\chi_{\Gamma_i}\bg_{L,\epsilon_n}$ that weakly converges in $H^{1,-1/2}_{\sigma,\Gamma}$ to some $\bt^i\in H^{1,-1/2}_{\sigma,\Gamma}$. Now, let us set
	\ben
		\bv\,=\,G \exs \bt^i\in H^{0,1}_{\sigma,\Omega}.
	\enn
	Since $\bt^i_n\rightarrow \bt^i$ weakly in $H^{1,-1/2}_{\sigma,\Gamma}$, the factorization of $N^\chi$ implies that $N^\chi \bg_{L,\epsilon_n}\!\rightarrow\, {\rm tr}_{\Gamma_m}\bv$ in $L^2_{\sigma,\Gamma_m}$ as $n\rightarrow\infty$ due to Proposition \ref{prop2}. 
	Since $\|N^\chi\bg_{L,\epsilon}-{\bm\phi}^\bzeta_L\xxs\|_{L^2_{\sigma,\Gamma_m}}\!\leq\epsilon$, we have
	$\bv={\bm\phi}_L^\bzeta$ on $\Gamma_m\times\R$, which means that the Laplace transforms of both functions coincide:
	\ben
		\widehat{\bv}(s,\cdot)\,=\,\widehat{{\bm\phi}_L^\bzeta}(s,\cdot) \quad\text{in}~L^2(\Gamma_m)~\text{for a.e.}~s\in\R+i\sigma.
	\enn
	Both $\widehat{\bv}(s,\cdot)$ and $\widehat{{\bm\phi}_L^\bzeta}(s,\cdot)$ satisfy the Navier equation with complex frequency $s$ in $\R^3\backslash(\Gamma\xxs\cup\xxs L)$. Due to our assumptions on $\Gamma_m$ that is either a closed Lipschitz surface or an
	analytic open surface, the unique continuation property and unique solvability of exterior scattering problems at complex frequencies in $\R+i\sigma$ imply that $\widehat{\bv}(s,\cdot)=\widehat{{\bm\phi}_L^\bzeta}(s,\cdot)$ in $H^1(\R^3\backslash(\Gamma\cup L))$ for a.e. $s\in\R+i\sigma$. 
	Let $\Gamma\not\ni x^\circ\in L$ and let $B_r$ be a small ball centered at $x^\circ$ such that $B_r\cap\Gamma=\emptyset$. In this case, $\widehat{\bv}$ is analytic in $B_r$, while $\widehat{{\bm\phi}_L^\bzeta}$ has a discontinuity across $B_r\cap L$. This contradiction shows that our assumption \eqref{eq15} is wrong and concludes the proof.
\end{proof}

\section{Laboratory implementation} \label{exp_set}
%=======================================================================================================%

\noindent This section makes use of the experimental data reported in~\cite{pour2021} and~\cite{Yue2021} to~(a)~examine the performance of TLSM for spatiotemporal tracking of evolving anomalies, and~(b)~conduct a comparative study of LSM-based reconstructions in time and frequency domains. Three distinct datasets are deployed in (a):~(i,ii)~waveforms collected at $75\%$ and $90\%$ of the maximum load in the post peak regime while fracturing the specimen in an MTS load frame according to~\cite{pour2021}, and~(iii)~data captured after the end of fracturing (at $60\%$ of the maximum load) where the specimen is dismounted from the load frame and ultrasonic experiments are performed according to~\cite{Yue2021}. From the latter, the multifrequency LSM reconstructions are also invoked for  the analysis in (b). To help better understand the data, a brief description of the experimental campaign in~\cite{pour2021} is provided in the sequel. Tests are sequentially conducted on a granite plate, with dimensions $0.96$m $\!\times\exs 0.3$m $\!\times\!$ 0.03m, mounted on a load frame to be fractured in the three-point bending configuration. Ultrasonic experiments are conducted at three stages (before bending starts and then while fracturing at $75\%$ and $90\%$ of the maximum load) such that the probing waves are interacting with an evolving scatterer. At each stage, in-plane shear waves of the form
\beq\lb{mat2}
\chi(t) =  H({\sf f}t) \, H(5\!-\!{\sf f}t) \, \sin\big(0.2 \pi {\sf f} t\big) \, \sin\big(2 \pi {\sf f} t\big),  \quad  {\sf f} = 30\text{KHz}, \quad  t \in (0 \, , T],
\eeq
are induced by an S-wave piezoelectric transducer at eight locations sampling $\Gamma_i$ on the specimen's boundary; here, $H(\cdot)$ is the Heaviside step function. The generated incident and total fields are then measured at 145 sensing points over the observation surface $\Gamma_m$ with the measurement period of ${T}=0.998 \text{ms}$ sampled at 1024 points. For image reconstruction (in time and frequency domains), the search area is a square of dimensions $29$cm $\nxs\!\times\!\nxs$ $29$cm in the middle of specimen discretized by a uniform grid of $100 \!\times\! 100$ points $\bz$, while the unit circle of trial normal direction $\textrm{\bf{n}}$ is sampled at 16 points. Therefore, the scattering footprints of 160000 trial dislocations $\mathcal{L}$ are used for the reconstruction in both time and frequency domains.

%-------------------------------------------------------------------------------------------------------------------------------------------------------------------
\subsection{Data Inversion}\label{DI}
%-------------------------------------------------------------------------------------------------------------------------------------------------------------------
The collected waveform data is processed as the following to compute the time-domain LSM (TLSM) maps. The latter involves four steps, namely:~(1)~assembling the scattered field ${\bf{v}}^\chi(\bx,t)$ over a unified grid in space-time,~(2)~constructing the composite near-field operator ${\boldsymbol{N}^\chi}(\bx,t)$ capturing convolution in time and multiplication in space over the source grid,~(3) computing the trial signature patterns ${\boldsymbol{\Phi}_{\boldsymbol{z},\textrm{\bf{n}}}^\bzeta}$ affiliated with $\mathcal{L}({\boldsymbol{z}}, \textrm{\bf{n}};\bzeta)$, and~(4) solving the discretized near-field equation through non-iterative minimization of the TLSM cost functional. 

%-------------------------------------------------------------------------------------------------------------------------------------------------------------------
\subsubsection{Scattered field in space-time}
%-------------------------------------------------------------------------------------------------------------------------------------------------------------------

The scattered field ${\bf{v}^{\chi}}(\bx,t)$ is computed by subtracting the free field from the total field measurements. The obtained signatures (in $x_1$ and $x_2$ directions) for multiple sources at $\by$ are then assembled as the following
\beq\lb{mat2}
{\bf{v}}^\chi (2(m-1)+1: 2(m-1)+2, k ; i) ~=\, 
\left[\begin{array}{c}
\!\!\text{v}_1\!\nxs   \\*[1mm] 
\!\!\text{v}_2\!\nxs  
\end{array}\!\right] \! (\bx_m,  t_k; \by_i ),
\eeq
for 
\beq\lb{ijl}
m = 1,2,\ldots N_m, \quad k = 1,2,\ldots N_t, \quad i = 1,2,\ldots N_i,
\eeq
wherein $N_m$, $N_t$, and $N_i$ indicating the number of samples on $\Gamma_m$, $t$, and $\Gamma_i$, respectively. 

%-------------------------------------------------------------------------------------------------------------------------------------------------------------------
\subsubsection{Composite near-field operator}
%-------------------------------------------------------------------------------------------------------------------------------------------------------------------
With reference to~\eqref{eqN}, the near-field scattering operator may be discretized as follows
\beq\lb{mat2}
\begin{aligned}
&[{\boldsymbol{N}^\chi } \bg_{\bz,\bn}^{\bzeta}](\ell, k) ~=\sum_{i=0}^{N_i}\sum_{j=0}^{k-1}{\text{v}}^\chi(\ell,k-j;i) g_{\bz,\bn}^{\bzeta}(i,j), \\
&\ell = 1,2,\ldots 2N_m, \quad  k = 1,2,\ldots N_t, \quad  i = 1,2,\ldots N_i.
\end{aligned}
\eeq
where the first summation indicates multiplication in space, while the second implies convolution in time.

\vspace{-1.5mm}
%-------------------------------------------------------------------------------------------------------------------------------------------------------------------
\subsubsection{Trial signatures in time}
%-------------------------------------------------------------------------------------------------------------------------------------------------------------------

On setting $\bzeta = \chi(t) \textrm{\bf{n}}$, every trial pair $(\bz,\textrm{\bf{n}})$ generates a unique scattering signature $\textrm{\bf{v}}_{{\bz},\textrm{\bf{n}}}({\bx},t)$ recorded at every time step $t_k \in (0 \, , T]$, $k = 1,2,\ldots N_t$, over the observation grid $\bx_m \in \Gamma_m$, $m = 1,2,\ldots N_m$, by solving
\beq\lb{PhiL2}
\begin{aligned}
&\nabla \nxs\cdot [\bC \exs \colon \! \nabla \textrm{\bf{v}}_{\bz,\textrm{\bf{n}}}](\bx,t) \,-\, \rho \exs \ddot{\textrm{\bf{v}}}_{\bz,\textrm{\bf{n}}}(\bx,t)~=~\bzero, \quad & \big(\bx \in {\mathcal{P}}\backslash \mathcal{L}, \, t \in (0, \,T]) \\*[0.75mm]
&\bn \nxs\cdot \bC \exs \colon \!  \nabla  \textrm{\bf{v}}_{\bz,\textrm{\bf{n}}}(\bx,t)~=~\bzero,  \quad & \big(\bx \in \partial{\mathcal{P}}_t, \, t \in (0, \,T]) \\*[0.75mm]
&\textrm{\bf{v}}_{\bz,\textrm{\bf{n}}}(\bx,t)~=~\bzero,  \quad & \big(\bx \in \partial{\mathcal{P}}_u, \, t \in (0, \,T]) \\*[0.75mm]
& \textrm{\bf{n}} \cdot \bC \exs \colon \!  \nabla  \textrm{\bf{v}}_{\bz,\textrm{\bf{n}}}(\bx,t) ~=~ |{\mathcal{ L}}|^{-1} \delta (\bx-\bz\!) \bzeta(t),
& \big(\bx \in \mathcal{L}, \, t \in (0, \,T])
\end{aligned}     
\eeq
where $\mathcal{P}$ represents the specimen; ${T}=0.998\text{ms}$ is the total measurement period, and $\partial{\mathcal{P}}_u$ signifies the support of three pins holding the sample in the loadframe. Simulations are performed in three dimensions via the computational platform reported in~\cite{Fatemeh2015} based on the boundary element formulation of~\eqref{PhiL2}. In this setting, the in-plane components of the computed scattered fields are recast in the following form
\beq\lb{Phi-inf-Dnum}
\bPhi_{\bz,\textrm{\bf{n}}}^\bzeta((m-1)\times 2+1:(m-1) \times 2+2, k) ~=\, 
\left[\begin{array}{c}
\!\!\text{v}^1_{\bz,\textrm{\bf{n}}}\!\nxs   \\*[1mm] 
\!\!\text{v}^2_{\bz,\textrm{\bf{n}}}\!\nxs  
\end{array}\!\right] \! ({\bx}_m, t_k ), 
\eeq
for $m = 1,2,\ldots N_m$, and $k = 1,2,\ldots N_t$. Here, $\bPhi^\bzeta_{\bz,\textrm{\bf{n}}}$ is a $2N_tN_m\!\times\! 1$ vector. 

%----------------------------------------------------------------------------------------------------------------------------------------------------
\subsubsection{TLSM indicator}
%----------------------------------------------------------------------------------------------------------------------------------------------------
To construct the TLSM maps, the discretized near-field equation  
\beq\lb{TLSM-C}
\begin{aligned}
&[{\boldsymbol{N}^\chi} \bg_{\bz,\textrm{\bf{n}}}^\bzeta](\bx_m, t_k)~=~\bPhi_{\bz,\textrm{\bf{n}}}^\bzeta(\bx_m, t_k), \quad \bx_m \in \Gamma_m, \,\, t_k \in (0, \,T], \\
& \qquad \qquad \quad m = 1,2,\ldots N_m, \quad k = 1,2,\ldots N_t,
\end{aligned}
\eeq 
is solved to obtain $\bg_{\bz,\textrm{\bf{n}}}^\bzeta$ for every trial pair $(\bz,\textrm{\bf{n}})$. Given the ill-posed nature of~\eqref{TLSM-C}, a regularized approximate solution $\tilde{\bg}_{\bz,\textrm{\bf{n}}}$ is obtained by minimizing the below Tikhonov cost function
\beq\label{lssm1}
\begin{aligned}
\tilde{\bg}_{\bz,\textrm{\bf{n}}} \,\,\colon \!\!\!= \,\, &\text{argmin}_{\bg_{\bz,\textrm{\bf{n}}}^\bzeta \exs\in\exs L^2(\Gamma_i)^3 \exs\times\exs L^2(0 \, T]} \, \Big\{  \\*[0.5mm]
& \norms{{\boldsymbol{N}}^\chi\bg_{\bz,\textrm{\bf{n}}}^\bzeta \,-\,\bPhi^\bzeta_{\bz,\textrm{\bf{n}}}}^2_{{L}^2(\Gamma_m)^3 \exs\times\exs L^2(0 \, T]} \,+\,\,\,  \eta_{{\bz},\textrm{\bf{n}}} \norms{{\bg_{\bz,\textrm{\bf{n}}}^\bzeta}}^2_{{L}^2(\Gamma_i)^3 \exs\times\exs L^2(0 \, T]}\Big\}.
\end{aligned}
\eeq
Here, the regularization parameter $\eta_{\bz,\textrm{\bf{n}}}$ is determined by the Morozov discrepancy principle~\cite{Kress1999}. 
The minimizer of~\eqref{lssm1} is then deployed to compute the TLSM indicator  
\beq\lb{TLSM-I}
\mathfrak{T}(\bz) \,\, = \,\, \frac{1}{\norms{\tilde{\bg}_{\bz}}_{{L}^2(\Gamma_i)^3 \exs\times\exs L^2(0 \, T]}}, \quad
\textcolor{black}{
\tilde{\bg}_{\bz} \,\,\colon \!\!\!= \,\, \text{argmin}_{\tilde{\bg}_{\bz,\textrm{\bf{n}}}} \norms{\tilde{\bg}_{\bz,\textrm{\bf{n}}}}_{{L}^2(\Gamma_i)^3 \exs\times\exs L^2(0 \, T]}},
\eeq
whereby one may also build the thresholded indicator  
\begin{equation}\label{TiFM}
\tilde{\mathfrak{T}}(\bz) \,\, := \,\, \mathbbm{1}_{\mathfrak{T}}({\bz \nxs}) \, \mathfrak{T}({\bz \nxs}), \quad {\mathbbm{1}}_{\mathfrak{T}}({\bz\nxs}) \,\, := \,\, 
\begin{cases}
1 \quad\quad\text{ if } \,\,\, \mathfrak{T}({\bz\nxs}) \,\,>\,\, \tau_{tol} \nxs\times\nxs \text{max}(\mathfrak{T})\\
0 \quad\quad\text{ otherwise}
\end{cases}\!\!\!\!\!\!,  \quad \tau_{tol} \exs\in\,\, ]0\,\,\, 1[.  
\end{equation}

In what follows, the frequency-domain LSM indicators ${\mathfrak{L}}$ and $\tilde{\mathfrak{L}}$ computed in~\cite{Yue2021} using post-fracturing waveforms, associated with $60\%$ of the maximum load, are invoked to be examined against their time-domain counterparts,~i.e.,~${\mathfrak{T}}$ and $\tilde{\mathfrak{T}}$.

%----------------------------------------------------------------------------------------------------------------------------------------------------
\subsection{Results and discussion}\lb{RE}
%----------------------------------------------------------------------------------------------------------------------------------------------------

The propagating fracture is periodically traced according to~\cite{pour2021} by spraying acetone on the back of specimen while being fractured in the load frame. The resulting images furnish the ground truths used to verify the time- and frequency- domain reconstructions in the sequel.

%----------------------------------------------------------------------------------------------------------------------------------------------------
\subsubsection{Full aperture reconstruction}
%----------------------------------------------------------------------------------------------------------------------------------------------------
%
The TLSM indicator $\mathfrak{T}$ (\emph{resp.}~$\tilde{\mathfrak{T}}$) in~\eqref{TLSM-I} (\emph{resp.}~\eqref{TiFM}) is calculated using the scattered displacements $\bv^\chi$ measured at $N_m = 145$ scanning points $\bx_m \in \Gamma_m$ on the specimen's boundary for $N_t = 1024$ uniformly distributed time steps $t_k \in (0 \,\,\, 0.998]\text{ms}$. At every testing stage, affiliated with $90\%$, $75\%$, and $60\%$ of the maximum load in the post peak~\cite{pour2021}, the transducer assumes $N_i = 8$ locations on $\Gamma_i$ implying that every sensing step entails eight independent ultrasonic experiments.  

The TLSM imaging functional~\eqref{TLSM-I} takes advantage of the rich sequential dataset and full-length waveforms in time to track the support of an advancing fracture $\Gamma = \Gamma(t)$ in space-time. Fig.~\ref{fracfull} shows the sequence of $\mathfrak{T}$ maps in the sampling region at the three loading stages mentioned above where the ground truths -- retrieved via acetone tracing in~\cite{pour2021} -- are used for verification.  

Fig.~\ref{EIF} provides a comparison between the time-domain reconstruction $\mathfrak{T}$ (\emph{resp}.~$\tilde{\mathfrak{T}}$) and its frequency-domain counterpart $\mathfrak{L}$ (\emph{resp}.~$\tilde{\mathfrak{L}}$) reported in~\cite{Yue2021}. It should be mentioned that the plots in Fig.~\ref{EIF} are affiliated with post-fracturing sensory measurements. The sharp localization and less artifacts featured in the TLSM maps could be attributed to the fact that $\mathfrak{T}$ makes use of the entire time history of data which entails less processing, whereas the multifrequency indicator $\mathfrak{L}$ deploys only the most pronounced spectral components of the measured waveforms whose interactions may be lost during signal processing and image construction. 

\begin{figure}[!tp]
\center\includegraphics[width=0.72\linewidth]{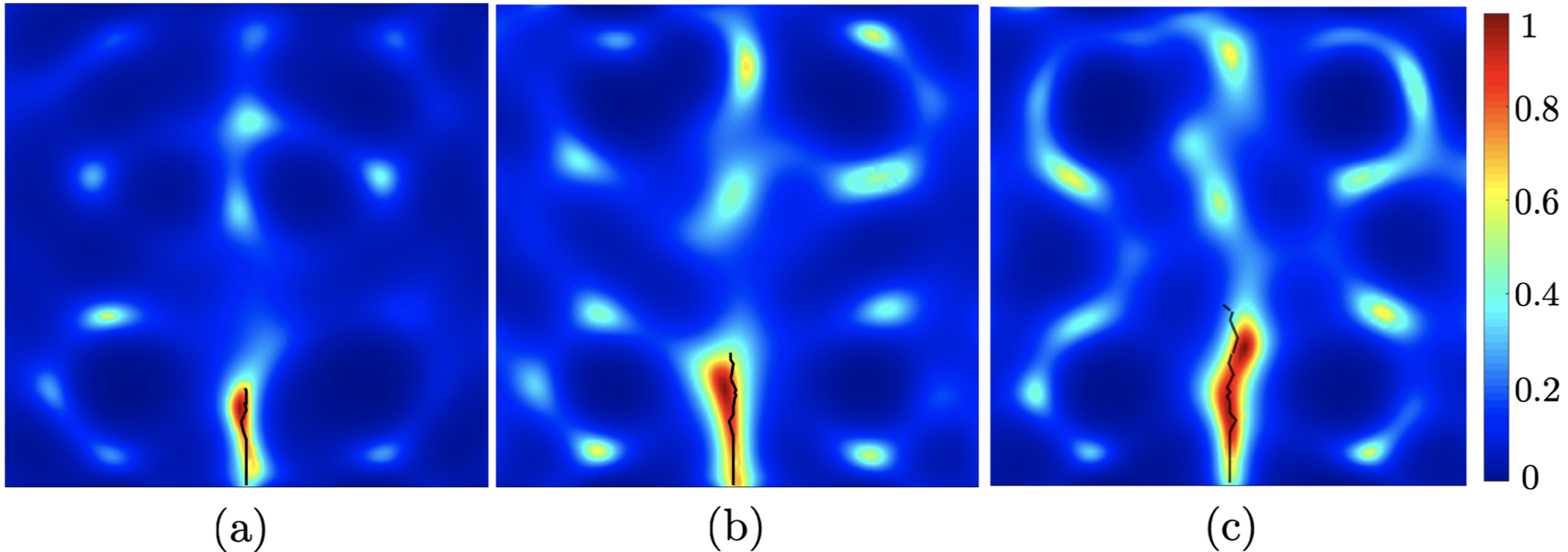} \vspace*{-2.5mm} 
\caption{\small{Time-domain reconstruction of an advancing fracture:~(a-c)~$\mathfrak{T}$ maps computed from ultrasonic waveforms measured at~$90\%$, $75\%$, and $60\%$ of the maximum load in the post-peak regime. The solid line shows the ground truth.}} \lb{fracfull}
\vspace*{-1.0mm}
\end{figure} 

\begin{figure}[!h]
\center\includegraphics[width=0.7\linewidth]{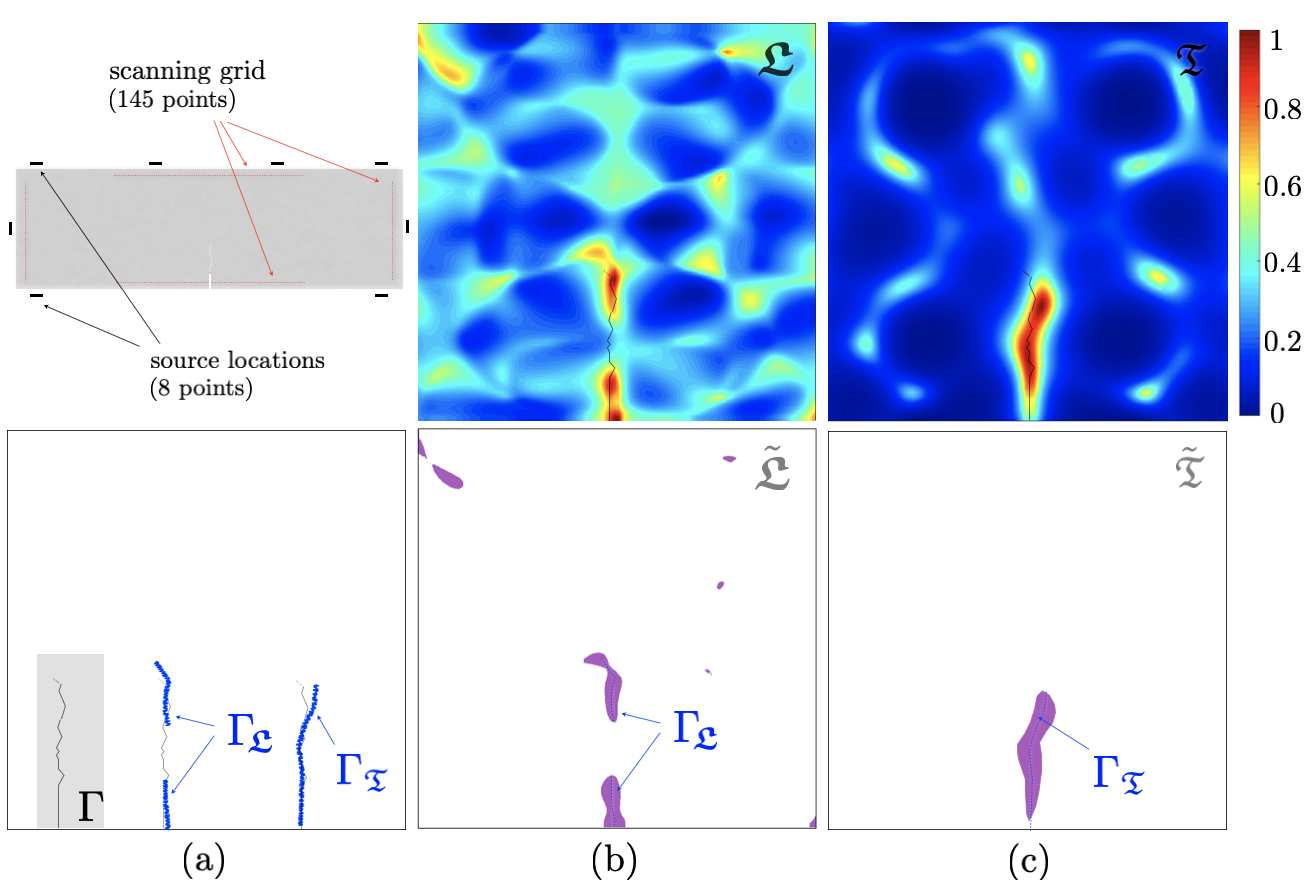} \vspace*{-2.5mm} 
\caption{\small{Time-~\emph{vs.}~frequency- domain reconstructions from post-fracturing data:~(a-top) sensing configuration, (a-bottom)~ground truth $\Gamma$ compared against the recovered ${\Gamma}_\mathfrak{T}$ and ${\Gamma}_{\mathfrak{L}}$ obtained from the thresholded maps,~(b)~multifrequency indicator map $\mathfrak{L}$ and its $60\%$ thresholded counterpart $\tilde{\mathfrak{L}}$~\cite{Yue2021}, and~(c)~time-domain indicator map $\mathfrak{T}$~\eqref{TLSM-I} and its affiliated $\tilde{\mathfrak{T}}$~\eqref{TiFM} thresholded at $60\%$.}} \lb{EIF}
\vspace*{0.5mm}
\end{figure} 

With reference to~\eqref{TiFM}, the thresholded maps $\tilde{\mathfrak{T}}$ and $\tilde{\mathfrak{L}}$ in Fig.~\ref{EIF} identify the support of sampling points $\bz$ where their associated imaging functional satisfies $\mathfrak{I}({\bz\nxs}) > 0.6 \nxs\times\nxs \text{max}(\mathfrak{I})$, $\mathfrak{I} \in \lbrace \mathfrak{T}, \mathfrak{L} \rbrace$. These maps are then used to approximate the fracture boundary $\Gamma_{\mathfrak{T}}$ and $\Gamma_{\mathfrak{L}}$ by drawing the mid-line through the thresholded damage zone as shown in Fig.~\ref{EIF} (bottom row). Comparing the recovered $\Gamma_{\mathfrak{T}}$ and $\Gamma_{\mathfrak{L}}$ with the ground truth $\Gamma$ further reveals the imaging ability of each indicator. 
 
%----------------------------------------------------------------------------------------------------------------------------------------------------
\subsubsection{Sparse reconstruction}
%----------------------------------------------------------------------------------------------------------------------------------------------------

To further investigate the performance of time-domain indicator with limited data, the scanning points on $\Gamma_m$ are uniformly downsampled to $N_m \in \lbrace 48, 28, 20, 16 \rbrace$ while the number of sources on $\Gamma_i$ remains $N_i =8$. The imaging functional $\mathfrak{T}$ is then recalculated using reduced data. The reconstruction results at $90\%$ and $75\%$ of the maximum load are shown in Fig.~\ref{DSTD}. The time- versus frequency- domain inversion results using reduced post-fracturing data are provided in Fig.~\ref{DSTF}. The TLSM indicator $\mathfrak{T}$ seem to remain robust with sparse data. 

\begin{figure}[!h]
\center\includegraphics[width=0.80\linewidth]{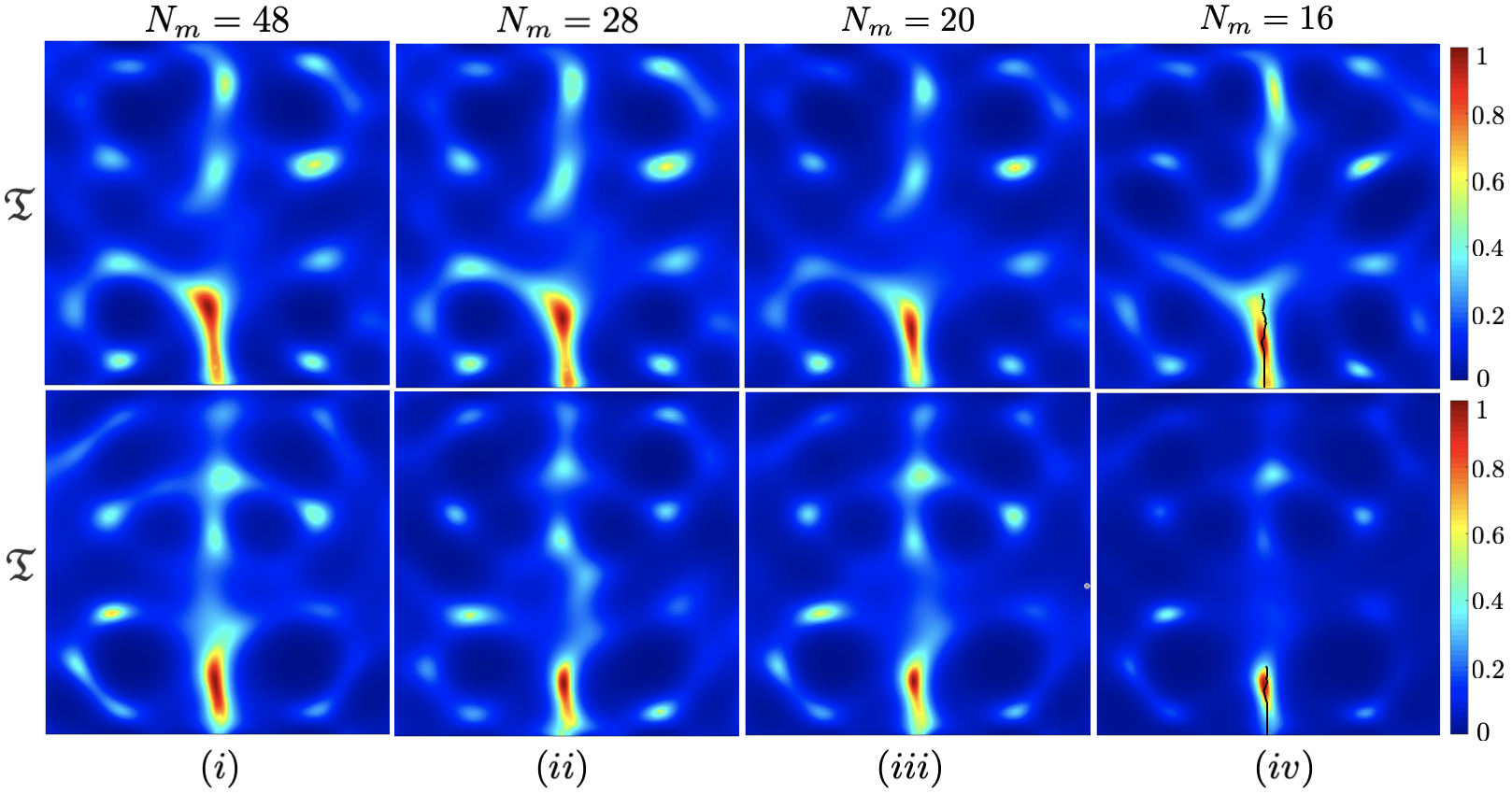} \vspace*{-3.5mm} 
\caption{\small{Time-domain reconstruction from reduced data at $75\%$ (top) and $90\%$ (bottom) of the maximum load where $\Gamma_m$ is sampled by $N_m$ points.}} \lb{DSTD}
\vspace*{-1.5mm}
\end{figure} 

\begin{figure}[!h]
\center\includegraphics[width=0.80\linewidth]{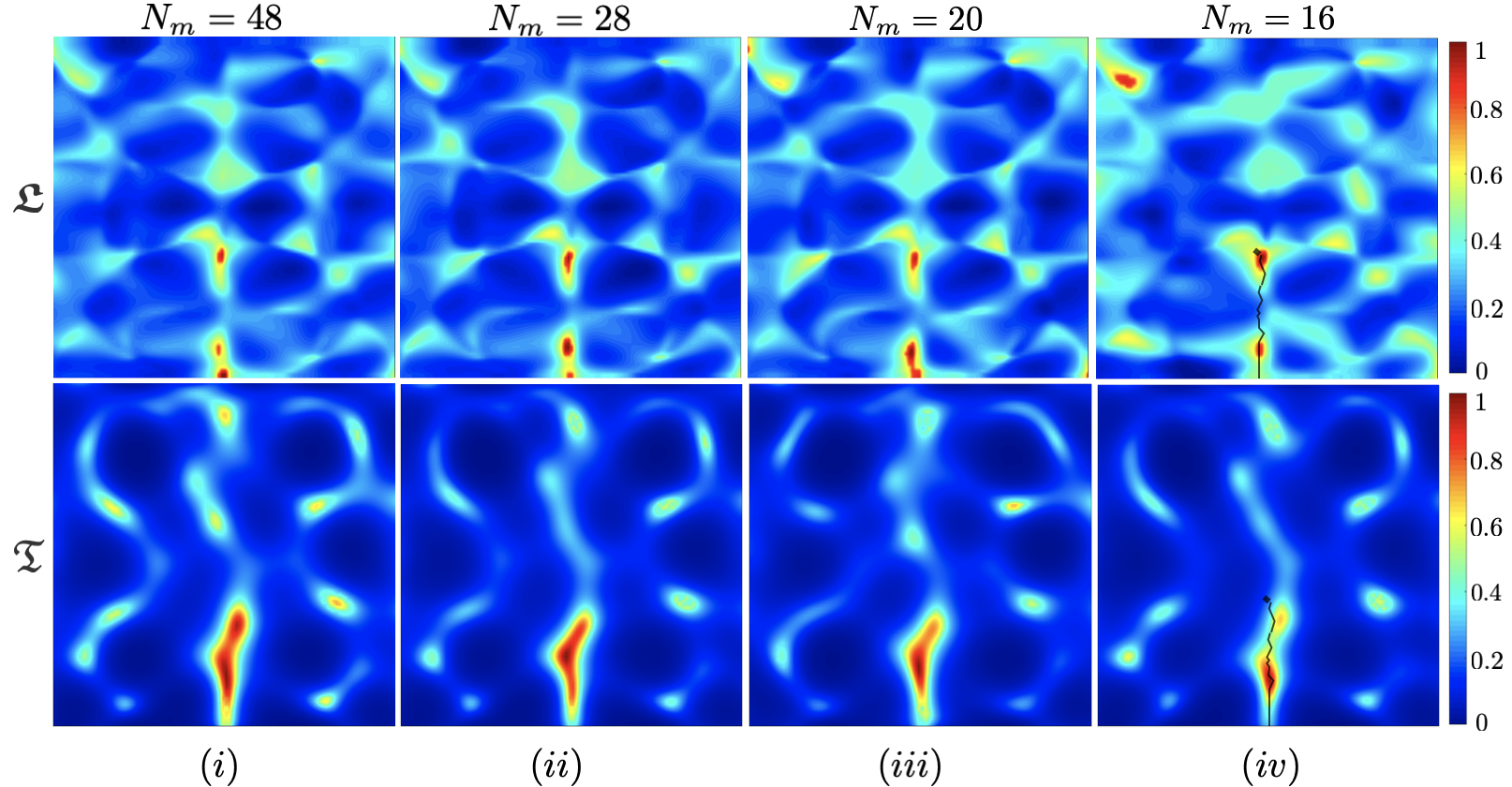} \vspace*{-2.5mm} 
\caption{\small{Time-~\emph{vs.}~frequency- domain inversion from reduced data at $60\%$ of the maximum load:~(top)~multifrequency indicator $\mathfrak{L}$~\cite{Yue2021}, and (bottom)~time-domain indicator $\mathfrak{T}$. Here, the number of scanning points on $\Gamma_m$ is $N_m$.}}\lb{DSTF}
\vspace*{-1.5mm}
\end{figure} 

%----------------------------------------------------------------------------------------------------------------------------------------------------
\subsubsection{Reduced aperture reconstruction}
%----------------------------------------------------------------------------------------------------------------------------------------------------

Partial-aperture and one-sided reconstructions are conducted in the time domain for sensing configurations shown in~Fig.~\ref{TEIF} where the results at $90\%$ and $75\%$ of the maximum load are illustrated. The comparison between $\mathfrak{T}$ and $\mathfrak{L}$ distributions using reduced-aperture data is provided in Fig.~\ref{TFIF}.  
 
\begin{figure}[!h]
\center\includegraphics[width=0.65\linewidth]{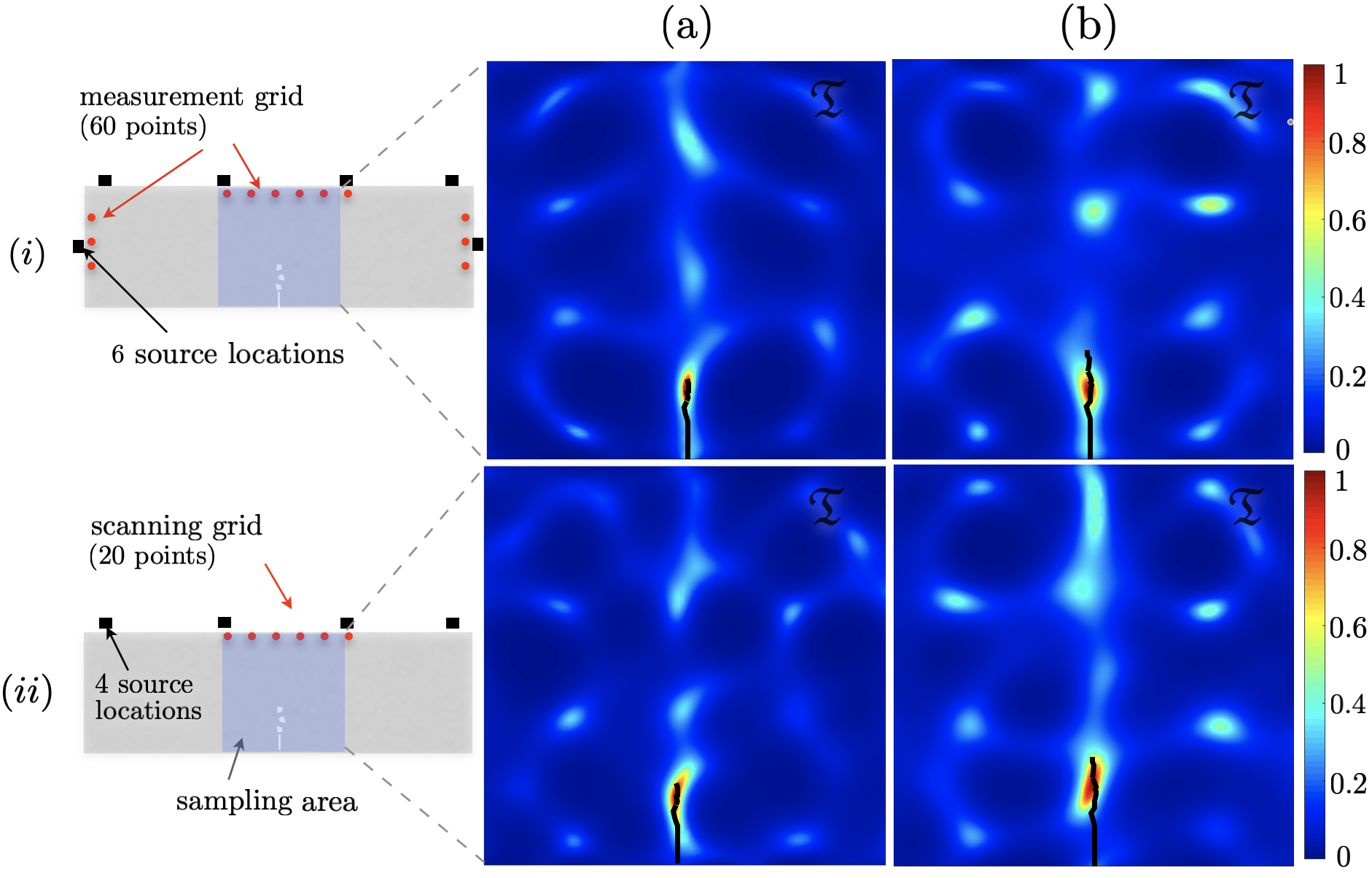} \vspace*{-2.5mm} 
\caption{\small{Partial-aperture time-domain reconstruction via~\eqref{TLSM-I} at~(a)~$90\%$ and~(b)~$75\%$ of the maximum load. The loci and number of source/measurement points (for each case) is indicated in the left column.}} \lb{TEIF}
\vspace*{-0.0mm}
\end{figure}

\begin{figure}[!h]
\center\includegraphics[width=0.65\linewidth]{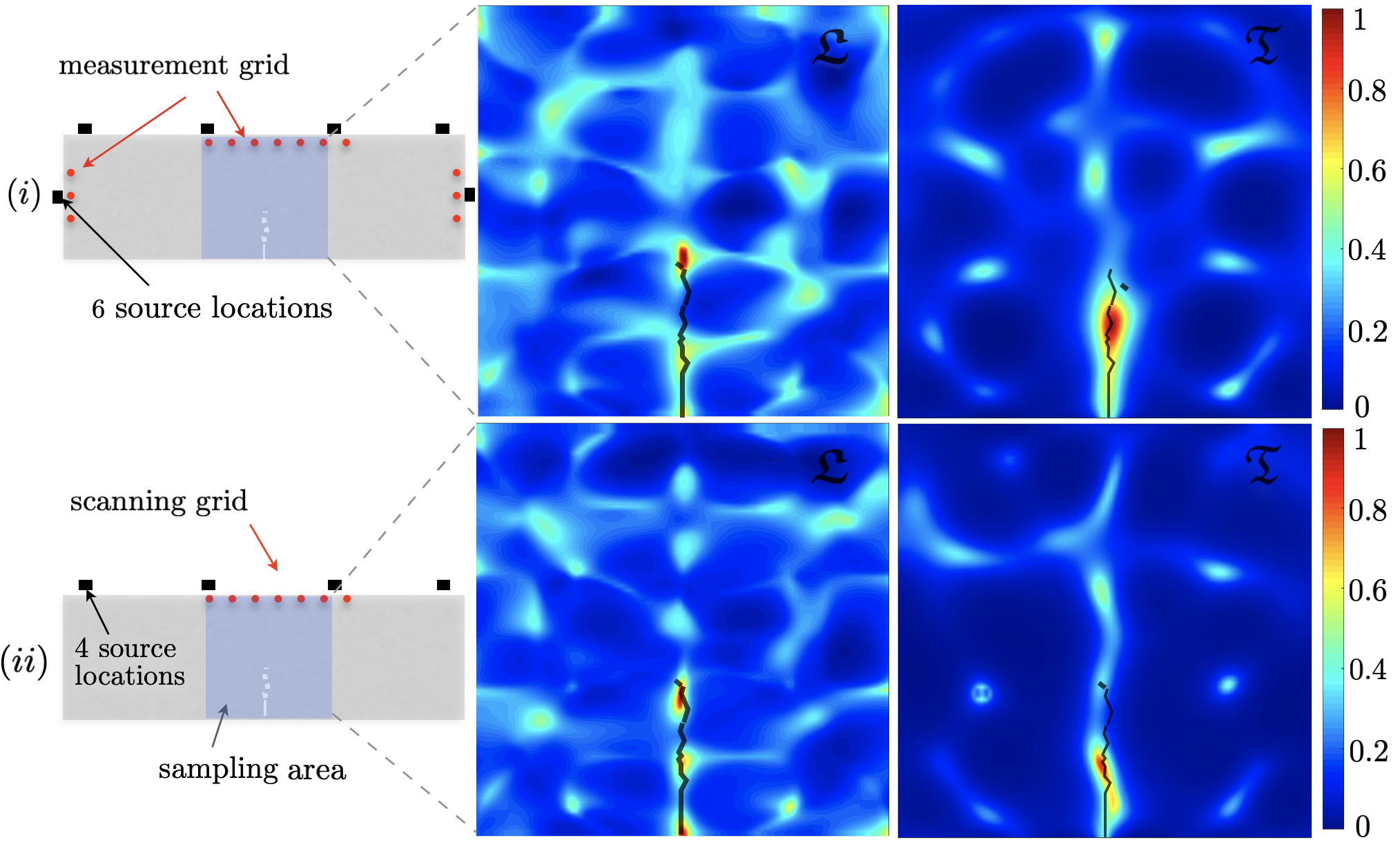} \vspace*{-2.5mm} 
\caption{\small{Time-~\emph{vs.}~frequency- domain inversion using (top) partial-aperture, and (bottom) one-sided data collected after fracturing. The sensing configuration for each row is indicated in the left column. The frequency domain $\mathfrak{L}$ maps are from~\cite{Yue2021}.}} \lb{TFIF}
\vspace*{-2.5mm}
\end{figure} 

%----------------------------------------------------------------------------------------------------------------------------------------------------
\section{Conclusion}\lb{Conc}
%----------------------------------------------------------------------------------------------------------------------------------------------------

This work provides the theoretical foundation of the time-domain linear sampling method for elastic-wave imaging of fractures which complements the (existing) LSM framework in the frequency domain, and thus, paves the way for a systematic comparison between time- and frequency- domain waveform inversion using laboratory experimental data of~\cite{Yue2021,pour2021}. The experiments reported by~\cite{Yue2021} (\emph{resp.}~\cite{pour2021}) feature interaction of ultrasonic waves with a stationary (\emph{resp.}~evolving) fracture in a plate whose signature on the specimen's boundary is captured for nondestructive evaluation. The TLSM indicator is applied to the scattered field data captured (a) at $90\%$ and $75\%$ of the maximum load in the post peak regime during propagation~\cite{pour2021}, and (b)~after the end of fracturing (occurred at $60\%$ of the maximum load)~\cite{Yue2021}. The TLSM maps affiliated with the sequential datasets in (a) and (b) successfully recover the spatiotemporal evolution of damage in the specimen. It is further shown that the reconstruction with sparse i.e.,~downsampled and/or reduced-aperture data remain robust at moderate noise levels. Using dataset (b), in parallel, a comparative analysis is conducted between the TLSM reconstructions and the corresponding multifrequency LSM maps reported by~\cite{Yue2021}. A remarkable contrast in image quality -- in terms of localization and presence of artifacts, is observed between the time- and frequency- domain inversions. The better quality of TLSM images are attributed to the full-waveform inversion in time (in addition to space) which involves both amplitude and phase information over the entire spectra during inversion.   

%~~~~~~~~~~~~~~~~~~~~~~~~~~~~~~~~~~~~~~~~~~~~~~~~~~~~~~~~~~~~~~~~~~~~~~~~~~~~~~~~~~~~~~~~~~~~~~~~~~~~~~~
\section{Acknowledgements}\lb{Ac}   
%~~~~~~~~~~~~~~~~~~~~~~~~~~~~~~~~~~~~~~~~~~~~~~~~~~~~~~~~~~~~~~~~~~~~~~~~~~~~~~~~~~~~~~~~~~~~~~~~~~~~~~~

XL is partly supported by the NSFC of China grant 12201023. FP and JS kindly acknowledge the support provided by the National Science Foundation (Grant No. 1944812) and the University of Colorado Boulder through FP's startup. The experimental data used in this work are taken from~\cite{Yue2021,pour2021}. This work utilized resources from the University of Colorado Boulder Research Computing Group, which is supported by the National Science Foundation (awards ACI-1532235 and ACI-1532236), the University of Colorado Boulder, and Colorado State University.

%=======================================================================================================%
\bibliography{inverse,crackbib}
%=======================================================================================================%

\end{document}